
\documentclass[]{interact}

\usepackage{epstopdf}
\usepackage[caption=false]{subfig}

\usepackage[numbers,sort&compress]{natbib}
\bibpunct[, ]{[}{]}{,}{n}{,}{,}

\theoremstyle{plain}
\newtheorem{theorem}{Theorem}[section]
\newtheorem{lemma}[theorem]{Lemma}

\newtheorem{proposition}[theorem]{Proposition}

\theoremstyle{definition}

\newtheorem{example}[theorem]{Example}

\theoremstyle{remark}
\newtheorem{remark}{Remark}

\usepackage{mathrsfs}
\usepackage[latin1]{inputenc}
\usepackage[T1]{fontenc}  
\usepackage{xcolor}
\usepackage{accents}
\usepackage{import}
\usepackage{graphicx}
\usepackage{hyperref}
\usepackage{float}

\newcommand{\R}{\mathbb{R}}

\newcommand{\M}{\mathcal{M}}

\newcommand{\Q}{\mathbb{Q}}

\newcommand{\T}{\mathbb{T}}
\newcommand{\E}{\mathbb{E}}
\newcommand{\Pb}{\mathbb{P}}
\newcommand{\Qb}{\mathbb{Q}}

\newcommand{\cF}{\mathcal{F}}

\newcommand{\cH}{\mathcal{H}}

\newcommand{\cL}{\mathcal{L}}
\newcommand{\cOML}{\tilde{\mathcal{L}}}
\newcommand{\cV}{\mathcal{V}}

\newcommand{\cOM}{\mathcal{OM}}
\newcommand{\cFM}{\mathcal{FM}}

\newcommand{\partiel}[1]{\frac{\partial}{\partial {#1}}}

\newcommand{\SO}{\mathrm{SO}}

\DeclareMathOperator{\Log}{Log}
\DeclareMathOperator{\Exp}{Exp}
\DeclareMathOperator{\Vol}{Vol}
\DeclareMathOperator{\Cut}{Cut}
\DeclareMathOperator{\Hess}{Hess}

\usepackage{mathtools}
\DeclarePairedDelimiterX{\norm}[1]{\lVert}{\rVert}{#1}

\newcommand{\refthm}[1]{Theorem~\ref{#1}}
\newcommand{\refprop}[1]{Proposition~\ref{#1}}
\newcommand{\reflemma}[1]{Lemma~\ref{#1}}

\newcommand{\refsec}[1]{Section~\ref{#1}}

\newcommand{\reffig}[1]{Figure \ref{#1}}

\begin{document}


\title{Simulation of Conditioned Semimartingales on Riemannian Manifolds}

\author{
\name{Mathias H\o jgaard Jensen and Stefan Sommer\thanks{Contact: Stefan Sommer, sommer@di.ku.dk}}
\affil{Department of Computer Science, University of Copenhagen, Denmark}
}

\maketitle

\begin{abstract}
We present a scheme for simulating conditioned semimartingales taking values in Riemannian manifolds. Extending the guided bridge proposal approach used for simulating Euclidean bridges, the scheme replaces the drift of the conditioned process with an approximation in terms of a scaled radial vector field. This handles the fact that transition densities are generally intractable on geometric spaces. We prove the validity of the scheme by a change of measure argument, and we show how the resulting guided processes can be used in importance sampling and for approximating the density of the unconditioned process. The scheme is used for numerically simulating bridges on two- and three-dimensional manifolds, for approximating otherwise intractable transition densities, and for estimating the diffusion mean of sampled geometric data.
\end{abstract}
\begin{keywords}
bridge simulation, conditioned diffusions, diffusion mean, geometric statistics, Riemannian manifolds
\end{keywords}

\section{Introduction}
Techniques for simulating Euclidean diffusion bridge processes, bridge sampling, have been studied in several cases over the last two decades, for instance \cite{clark1990simulation,delyon_simulation_2006,schauer2017guided,van2017bayesian}. Delyon \& Hu \cite{delyon_simulation_2006} introduced a scheme to simulate conditioned diffusions using a stochastic differential equation (SDE) that can be directly simulated and is absolutely continuous with respect to the desired distribution. The SDE interchanged a drift term depending on the possibly intractable transition density with a drift arising from a Brownian bridge going from $a$ to $b$ in time $T$ for $a,b \in \R^d$.

We here propose a scheme for simulating conditioned semimartingales on Riemannian manifolds. This generalizes the result of Delyon and Hu \cite{delyon_simulation_2006} to the setting of smooth connected Riemannian manifolds. We prove analytically that the result holds on a variety of manifolds, including the space of symmetric positive definite matrices considered by Bui et al. \cite{buiInferencePartiallyObserved2022}. More precisely, we show that by adding a drift term to the semimartingale, the manifold equivalent to the drift term introduced in \cite{delyon_simulation_2006} to approximate the gradient of the log-transition density, we obtain a process that converges to the desired endpoint. The process is absolutely continuous with respect to the targeted diffusion bridge distribution.

Bridge sampling is an essential part of likelihood and Bayesian inference for discretely observed stochastic processes. For example, bridge sampling finds applications in geometric statistics, medical imaging, and shape analysis. Recent papers have introduced algorithms to simulate stochastic bridges on specific manifolds \cite{buiInferenceRiemannianManifolds2022,buiInferencePartiallyObserved2022,arnaudon2020diffusion,jensen2019simulation,sommer2017bridge}, which indicate the necessity for bridge sampling algorithms on general Riemannian manifolds. In particular, the ability to estimate the transition density of a stochastic process enables likelihood-based and Bayesian approaches to geometric statistics.

Figure \ref{fig: sphere}(a) shows sample paths generated by the sampling scheme set up to approximate a conditioned Brownian motion on the sphere $\mathbb{S}^2$ starting at the north pole and conditioned to hit the south pole at $T=1$. The scaled squared radial vector field, $\tfrac{\nabla d(\cdot,v)^2}{2(T-t)}$ Figure \ref{fig: sphere}(b) acts as the guiding term forcing the process towards the target $v$.

\begin{figure}[h] 
    \centering
	\subfloat[Four simulated paths of a diffusion bridge process from the north pole (red point) to the south pole (black point)]{\includegraphics[width=.45\linewidth]{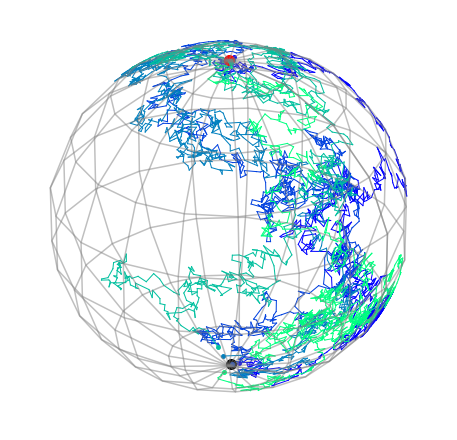}}
    \qquad
	\subfloat[Gradient vector field of the squared distance function on the $2$-sphere centered at the south pole.]{\includegraphics[width=.45\linewidth]{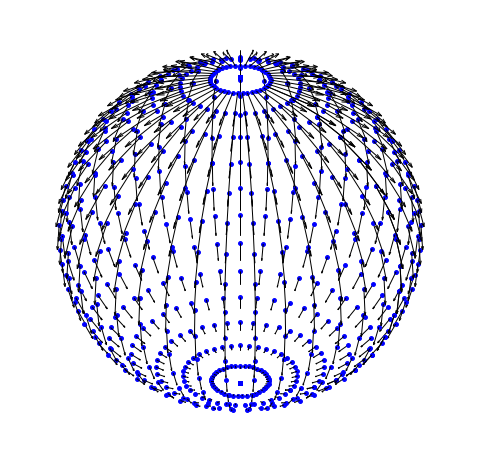}}
	\caption{ Realizations of the bridge simulation scheme in a case where the target point is in the cut locus of the starting point, and the guiding vector field that approximates the score.}\label{fig: sphere}
\end{figure}

\begin{figure}[h] 
    \centering
	\subfloat[Illustration of a sample path of a radial bridge, $X_t$, from starting point $x$ to target $v$ with corresponding guiding drift indicated by arrows. The drift discontinuously changes sign when crossing the cut locus of the target (vertical line).]{\def\svgwidth{.45\columnwidth}
\begingroup%
  \makeatletter%
  \providecommand\color[2][]{%
    \errmessage{(Inkscape) Color is used for the text in Inkscape, but the package 'color.sty' is not loaded}%
    \renewcommand\color[2][]{}%
  }%
  \providecommand\transparent[1]{%
    \errmessage{(Inkscape) Transparency is used (non-zero) for the text in Inkscape, but the package 'transparent.sty' is not loaded}%
    \renewcommand\transparent[1]{}%
  }%
  \providecommand\rotatebox[2]{#2}%
  \newcommand*\fsize{\dimexpr\f@size pt\relax}%
  \newcommand*\lineheight[1]{\fontsize{\fsize}{#1\fsize}\selectfont}%
  \ifx\svgwidth\undefined%
    \setlength{\unitlength}{595.27559055bp}%
    \ifx\svgscale\undefined%
      \relax%
    \else%
      \setlength{\unitlength}{\unitlength * \real{\svgscale}}%
    \fi%
  \else%
    \setlength{\unitlength}{\svgwidth}%
  \fi%
  \global\let\svgwidth\undefined%
  \global\let\svgscale\undefined%
  \makeatother%
  \begin{picture}(1,1.41428571)%
    \lineheight{1}%
    \setlength\tabcolsep{0pt}%
    \put(0,0){\includegraphics[width=\unitlength]{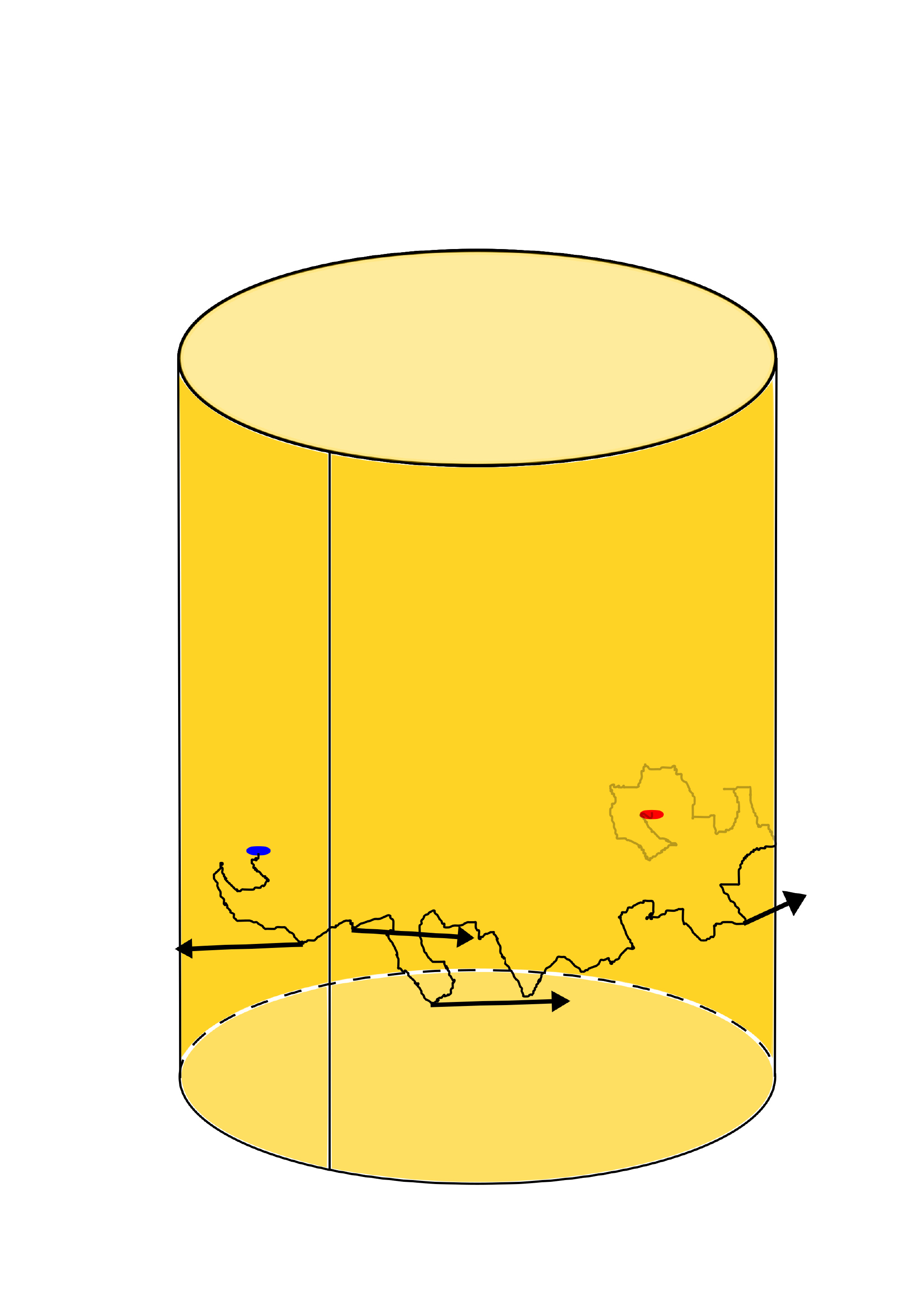}}%
    \put(0.08196267,0.25657879){\color[rgb]{0,0,0}\makebox(0,0)[lt]{\lineheight{1.25}\smash{\begin{tabular}[t]{l}$\frac{\nabla d(X_t,v)^2}{2}$\end{tabular}}}}%
    \put(0.23080609,0.89051951){\color[rgb]{0,0,0}\makebox(0,0)[lt]{\begin{minipage}{0.62006531\unitlength}\raggedright $\Cut(v)$\end{minipage}}}%
    \put(0.65078551,0.64341772){\color[rgb]{0,0,0}\makebox(0,0)[lt]{\begin{minipage}{0.26213001\unitlength}\raggedright $X_T=v$ \end{minipage}}}%
    \put(0.67076075,0.37971174){\color[rgb]{0,0,0}\makebox(0,0)[lt]{\begin{minipage}{0.17283429\unitlength}\raggedright $X_t$\end{minipage}}}%
    \put(0.2319933,0.54987339){\color[rgb]{0,0,0}\makebox(0,0)[lt]{\begin{minipage}{0.27127856\unitlength}\raggedright $X_0=x$\end{minipage}}}%
  \end{picture}%
\endgroup%
}
    \qquad
	\subfloat[Radial vector field on the cylinder, $X_t$, centered at the target $v$.]{\includegraphics[width=.45\linewidth]{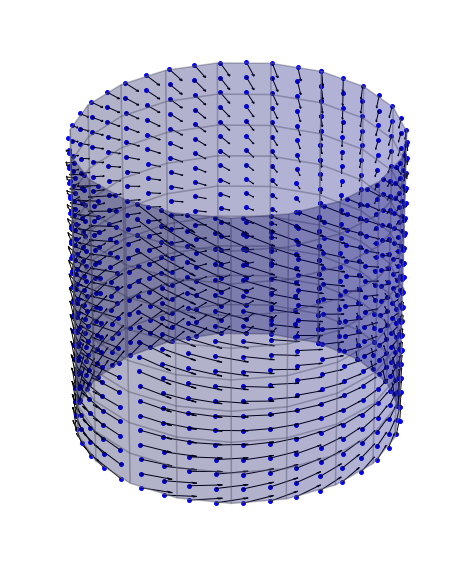}}
	\caption{Illustration of the effect of the cut locus on the radial bridge process. The guiding drift term, indicated by the tangent vectors along the sample path in (a) and the radial vector field (b) discontinuosly changes directions when crossing discontinuosly the cut locus.
  }\label{fig:cylinder cut locus}
\end{figure}

\subsection{Guided proposals}\label{section:delyon & hu}
Doob's $h$-transform is a classical way to show that the SDE for the conditioned process has a drift term that depends on the gradient of the logarithmic transition density, the score $\nabla_x \log p(t,x,v)$. Due to the generally intractable transition density, various methods to simulate conditioned processes have been introduced (e.g., \cite{clark1990simulation,delyon_simulation_2006,schauer2017guided,van2017bayesian}).

Delyon \& Hu \cite{delyon_simulation_2006} presented an algorithm to simulate the distribution of certain types of diffusions in $\R^d$ conditioned to hit a terminal point, at some fixed time $T > 0$.  Their main idea was to substitute the drift $\nabla_x \log p(t,x,v)$, which depends on the transition density, with the drift term appearing in the SDE for the generalized Brownian bridge. More precisely, they showed that the law of the process 
    \begin{equation*}
        dX_t = b(t,X_t)dt + \sigma(t,X_t)dW_t, \qquad X_0 = a,
    \end{equation*}
conditioned on $X_T = v$ is absolutely continuous with respect to the law of the process
	\begin{equation}\label{eq: proposed bridge Delyon and Hu}
		dY_t = b(t,Y_t)dt - \frac{Y_t-v}{T-t} + \sigma(t,Y_t) dW_t, \qquad Y_0 = a
	\end{equation}
under suitable conditions on $b$ and $\sigma$.
Moreover, the conditional expectation given $X_T=v$ satisfies
	\begin{equation}\label{delyon and hu result}
	 \E\left[f(X)|X_T=v\right] = C\E\left[f(Y)\varphi_T\right],
	\end{equation}
for a constant $C > 0$, where $\varphi_T$ denotes the likelihood ratio. We present a result that generalizes the SDE in \eqref{eq: proposed bridge Delyon and Hu} and essentially also equation \eqref{delyon and hu result} to Riemannian manifolds, with an explicit expression for the likelihood ratio $\varphi_T$. 

Switching from vector space to Riemannian manifolds, curvature removes the closed form solution of the Brownian bridge drift. Inspired by Delyon and Hu's construction and the notion of Fermi bridges \cite{thompson2015submanifold,thompson2018brownian}, we instead propose to use the gradient $\nabla_xd(\cdot,v)^2/2$ in the drift term in a non-Euclidean generalization of \eqref{eq: proposed bridge Delyon and Hu}. However, the existence of a non-trivial cut locus implies that this radial vector field is not continuous. Moreover, the $t\to 0$ convergence of the term to the gradient of the log-density is more intricate than the Euclidean situation.
As shown by Malliavin, Stroock \cite{malliavin1996short}, and Turetsky \cite{stroock1997short},
\begin{equation}\label{equation: convergence of grad log heat kernel}
  \lim_{t\to 0} t \nabla^m_x\log p_{\M}(t,x,v) = - \nabla^m_x d(x,v)^2/2, 		\qquad \text{for all } m \geq 0,
	\end{equation}
uniformly on compact subsets of $\M\backslash \Cut(v)$, where $\nabla^m$ is the $m$th covariant derivative and $p_{\M}$ the heat kernel or the transition density of a Brownian motion on a manifold $\M$. The behaviour of the right hand side of \eqref{equation: convergence of grad log heat kernel}, for $m=1$, is illustrated in Figure \ref{fig:cylinder cut locus}. The cut locus adds extra difficulty because the squared distance is not differentiable on the cut locus, and the convergence in \eqref{equation: convergence of grad log heat kernel} is only uniform away from the cut locus.

\subsection{Bridge sampling: Parameter, density, and metric estimation}
Bridge processes are useful in a range of statistical and applied mathematical problems. We here list a few examples:
Bridge sampling is essential for building data augmentation algorithms \cite{van2017bayesian}. Treating sample points as if coming from incomplete observations, bridge sampling algorithms provides a better understanding of the underlying distribution, thereby offering a method for parameter estimation. Delyon and Hu \cite{delyon_simulation_2006} presents a rather specific example of parameter estimation using bridge sampling.
Furthermore, bridge sampling techniques can be applied to estimate normalizing constants (see, e.g., \cite{gelman1998simulating}). 

Statistical models based on diffusion processes find applications in geometric statistics, see, e.g. \cite{pennec2019riemannian}. In particular, bridge sampling on manifolds yields estimates of the diffusion mean \cite{hansen2021diffusiongeometric,hansen2021diffusion}.  The diffusion mean relies on the transition density of a Brownian motion and is a generalization of the Fr\'echet mean, defined as the argument that minimizes the average of distances. In this case, bridge sampling approximates the transition density or data likelihood. We demonstrate in the numerical examples how bridge sampling can be used to approximate the transition density and to find diffusion means.

Estimation of transition densities by bridge sampling find uses in shape analysis \cite{arnaudon2017stochastic,arnaudon2019geometric,sommer2017bridge}, e.g. for estimating the underlying Riemannian metric given shape observations.

\subsection{Outline}
In \refsec{sec:background}, we review relevant existing work on manifold bridge processes and the frame bundle theory used to describe stochastic integration on manifolds. \refsec{sec:girsanov change of measure} describes the Radon-Nikodym derivative related to the change of measure and a Cameron-Martin-Girsanov change of measure result.
In \refsec{sec:2 main results - bride simulation on manifolds}, we present our two main results that generalize \cite[Theorem 5]{delyon_simulation_2006} to manifold-valued semimartingales.
\refsec{sec:radial part of semimartingale} is devoted to the radial process. We apply Barden and Le's result \cite[Theorem 3]{le1995ito} to the radial process and show the almost sure convergence of our guided process. 
We go on to rigorously prove the two propositions in \refsec{sec:proof of main result}, before ending with numerical examples in \refsec{sec:numerical experiments}. We visualize the result of the simulation scheme on two- and three-dimensional manifolds, and we use the approach to approximate the heat kernel on different 2-dimensional manifolds. Furthermore, we provide examples of how iterative bridge sampling can be used to estimate diffusion means.

\subsection{Notation and conventions}\label{sec:notation and convention}
Throughout the paper, we use the following notation and make the listed assumptions:
\begin{itemize}
 \item $(\Omega, \cF, (\cF_t)_{t \geq 0}, \Pb)$ is a complete filtered probability space satisfying the usual conditions, i.e., the filtration $(\cF_t)_{t \geq 0}$ is complete and right-continuous.
    \item $\M$ is a $d$-dimensional smooth manifold.
	\item $W_t$ a $d$-dimensional Euclidean Brownian motion.
	\item $Z_t$ an $d$-dimensional Euclidean semimartingale, a solution to the SDE $dZ_t = b(t,Z_t)dt + \sigma(t,Z_t)dW_t$, for suitably integrable $b$ and $\sigma$.
	\item $U_t$ a horizontal semimartingale in the frame bundle $\cFM$.
	\item $X_t$ a $\M$-valued semimartingale defined as the projection $X_t := \pi(U_t)$.
	\item $r_v(X_t) := d(X_t,v)$ is the Riemannian distance between $X_t$ and $v \in \M$.
	\item Both the diffusion field $\sigma_t(z) := \sigma(t,z)$ and its inverse $\sigma^{-1}_t(z)$ are bounded and differentiable	.
	\item The bracket process $[Z]$ is absolutely continuous as a random measure on $[0,\infty)$.
	\item The set $\{t \colon X_t \in \Cut(v)\}$ is a Lebesgue null-set.
	\item All SDEs admits strong solutions.
\end{itemize}
We will use $\tilde{\,}$ to denote lifts of functions on $M$ to fiber bundles, i.e., with projection $\pi$, $\tilde{f}:= f\circ \pi$.
As the distance function $d(\cdot,y)$ is non-differentiable on the cut locus, we take $\nabla_x d(x,y)$ to be the usual gradient away from cut locus and define it to be zero at the cut locus. 
We will repeatedly use the Einstein summation convention.

\section{Background}\label{sec:background}

\subsection{Manifold valued processes}\label{section: manifold valued diffusions}
We review the theory of frame bundles and manifold-valued diffusion processes. In particular, we describe the main concepts of differential geometry, which are needed to define the Eells-Elworthy-Malliavin construction of manifold-valued stochastic processes through horizontal lifts. Two common references for stochastic calculus on manifolds are Emery \cite{emery1989stochastic} and Hsu \cite{hsu2002stochastic}.

\subsubsection{Riemannian geometry}
The frame bundle $\cFM$ of a $d$-dimensional smooth manifold is a $d^2+d$ dimensional manifold, where each point $u \in \cFM$ correspond to a point $x \in \M$ together with an ordered basis (frame) of the tangent space $T_x \M$. It is convenient to think of $u$ as a linear bijection $u \colon \R^d \rightarrow T_{\pi(u)}\M$, where $\pi \colon \cFM \rightarrow \M$ denotes the canonical projection. In this sense we obtain an ordered basis $(u(e_1),...,u(e_d))$ of $T_{\pi(u)}\M$ from the canonical basis $(e_1,...,e_d)$ of $\R^d$. A connection in $\cFM$ is a smooth choice of subspaces $\cH_u\cFM \subseteq T_u\cFM$, for each $u\in \cFM$, such that the tangent space at $u \in \cFM$ splits into $T_u\cFM = \cV_u \cFM \oplus \cH_u \cFM$, where $\cV_u \cFM$ denotes the tangent vectors that are tangent to the fibers. The space $\cH_u \cFM$ is called the horizontal tangent space. If we endow $\M$ with a Riemannian structure, that is, a smoothly varying inner product on the tangent spaces, the set of frames can be restricted to orthonormal frames, i.e., the map $u$ being a linear isometry. The resulting subbundle is denoted the orthonormal frame bundle, $\cOM$. We will throughout primarily work with $\cOM$. There is a one-to-one correspondence between the horizontal tangent space at $u$ and the tangent space at $\pi(u)$. This correspondence is described through the restriction of the pushforward map $\pi_*|_{\cH_u\cOM}\colon T_u\cOM \rightarrow T_{\pi(u)}\M$. Furthermore, on the horizontal part of the frame bundle, there exists a set of fundamental horizontal vector fields, $H_1,...,H_d$, defined by $H_i(u) = h_u\left(u(e_i)\right)$, where $h_u = \left(\pi_*|_{\cH_u\cOM}\right)^{-1}$ is the horizontal lift.

It is sometimes convenient to linearize the manifold in the following sense. Let $E \colon \mathbb{R}^d \rightarrow T_p\M$ be an isometric isomorphism from $d$-dimensional Euclidean space to the tangent space at $p \in \M$ and let $\Exp_p \colon T_p\M \rightarrow \M$ denote the exponential map at $p$. Define $\phi \colon D \rightarrow \mathbb{R}^d$ by $\phi := E^{-1} \circ \Exp_p^{-1}$, where $D \subseteq \M$ is the largest subset such that $\Log_p :=\Exp_p^{-1}$ is well-defined, then the pair $(D,\phi)$ is a normal neighborhood centered at $p \in \M$. 

\subsubsection{Horizontal semimartingales - stochastic development}
When the starting frame $U_0$ is fixed, there exists a one-to-one correspondence between Euclidean-, horizontal-, and manifold-valued curves. If $z_t$ is an $\R^d$ curve with derivative $dz_t$, then $H_{dz_t}(u_t) = h_{u_t}(u_t(e_i))dz_t^i =H_i(u_t)dz_t^i$. The construction is called horizontal development and extends to include semimartingales.

 If $Z$ is a continuous Euclidean-valued semimartingale, i.e., a process which decomposes into a sum of a local martingale and an adapted process of locally bounded variation, the horizontal vector fields give rise to an SDE on the frame bundle, driven by the Stratonovich SDE, 
\begin{equation}
	dU_t = H_i(U_t) \circ dZ^i_t, \qquad  U_0=u \in \cFM, \label{eq:horizontal semimartingale sde} 
\end{equation}
where $U$ is a horizontal semimartingale on $\cOM$. The canonical projection, $X:=\pi(U)$, defines a process on $\M$. The process $U$ is called the stochastic development of $Z$ in $\cOM$ and $X$ the stochastic development of $Z$ in $\M$. Similarly, $Z$ is called the anti-development of $X$ and $U$. This way of constructing stochastic processes on manifolds is contributed to Eells, Elworthy, and Malliavin, and is often referred to as the Eells-Elworthy-Malliavin construction. In colloquial terms, the construction is called "rolling without slipping." The intuition behind this terminology originates from rolling a ball along a path drawn with wet ink on a piece of paper. For a detailed description of this construction, one may consult \cite{hsu2002stochastic}. 

Define $\tilde h_u$ as the solution to the stochastic development equation \eqref{eq:horizontal semimartingale sde}, i.e., $\tilde h_u(Z_t) = U_t$. We call this map the stochastic development map. Similarly, we define the stochastic anti-development map as $\tilde h_u^{-1}(U_t) = Z_t$.

\subsubsection{Geometric It\^o formula}
The fundamental theorem of stochastic calculus is It\^ o's formula. The formula generalizes to manifold-valued semimartingales. If $X$ is a continuous manifold-valued semimartingale, $U$ its horizontal lift, and $Z$ its anti-development, then for any smooth function $f \in C^{\infty}(\M)$ the geometric It\^ o formula can be expressed as
	\begin{equation*}\label{ito's formula}
	df(X_t) = U_t(e_i)f(X_t)dZ^i_t + \tfrac{1}{2} U_t(e_j)U_t(e_k)f(X_t)d[Z^j,Z^k]_t.
	\end{equation*}
The formula can equivalently be expressed in terms of the gradient and Hessian as
	\begin{equation*}
	df(X_t) = \left\langle \nabla f(X_t), U_t dZ_t\right\rangle_{X_t} + \tfrac{1}{2} \Hess_{X_t}f(U_t,U_t)d[Z]_t,
	\end{equation*}
 where $\langle \cdot, \cdot \rangle_x$ denotes the Riemannian inner product on $T_x\M$. We simply write $\langle \cdot, \cdot \rangle$ when the subscript is clear from the context.

\subsubsection{Riemannian Brownian bridges}
Brownian bridges on manifolds also have an SDE representation. Whenever the semimartingale $Z$ is a Brownian motion, the resulting process $X$ on $\M$ (resp. $U$ on the horizontal part of $\cOM$) is a Brownian motion. With $p(t,x,v)$ denoting the transition density of the $\M$-valued Brownian motion and $\tilde{p}(t,u,v) = p(t,\pi(u),v)$ its lift to $\cOM$, the corresponding Brownian bridge is the solution of the SDE
	\begin{equation}
		dU_t = H_i(U_t) \circ \left(dB^i_t + \left(U_t^{-1} \left(\pi_*\nabla_{u|_{u=U_t}}^H \log \tilde{p}(t,u,v)\right)\right)^i dt \right), \qquad U_0 = u_0,
        \label{eq:Brown_bridge}
	\end{equation}	 	
where $\nabla^H\tilde{f} = \{H_1 \tilde{f}, \dots, H_d \tilde{f}\}$ is the horizontal gradient. As was the case in the Euclidean setting, the SDE includes the Brownian motion's transition density. However, unlike the Euclidean case for Brownian motion, the transition density is generally not computable in closed-form.

Writing the transition density for the Brownian motion as $p(t,x,v)= p_t(x,v)$, a natural representation of the Brownian bridge is as a time-inhomogeneous Markov process with an infinitesimal generator 
\begin{equation*}
	\tfrac{1}{2}\Delta_{\M} + \nabla \log p_{T-t}(\cdot,v),
\end{equation*}
where $\Delta_{\M}$ is the generalized Laplacian (Laplace-Beltrami) operator on $\M$. We will drop the $\M$ in the sequel whenever referring to the Laplace-Beltrami operator.

\subsubsection{Related constructions}\label{sec:related works}
As the Brownian motion's transition density is only known in a closed-form on few manifolds, other types of bridge processes have been considered (see e.g. \cite{hsu_brownian_1990}). A semi-classical Brownian bridge on a Riemannian, also known as Brownian Riemannian bridge, is a time inhomogeneous strong Markov process with infinitesimal generator 
\begin{equation*}
	\tfrac{1}{2}\Delta + \nabla \log k_{T-t}(\cdot, v),
\end{equation*}
where $k_t$ is the function defined by
\begin{equation*}
	k_t(x,v) := (2 \pi t)^{-n/2} e^{- \frac{r^2(x,v)}{2t}}			J^{-1/2}(x),
\end{equation*}
and $J(x) = |\det D_{\Exp_{v}^{-1}(x)} \Exp_{v}|$ is the Jacobian determinant of the exponential map at $v$ (see any of \cite{elworthy1988geometric,elworthy1982diffusion,elworthy1982stochastic} for more details on semi-classical bridges). This description typically relies on the existence of a pole, i.e. a point in $\M$ where the exponential map maps diffeomorphically to $\M$. The assumption avoids the nuissance of the cut locus. The radial part of the semi-classical bridge has the distribution of a Euclidean valued Brownian bridge. 
 
A generalization beyond the cut locus of the heat kernel formula described by Elworthy and Truman \cite{elworthy1982diffusion} is due to Thompson \cite{thompson2015submanifold}. Let $N$ be a closed embedded submanifold on $\M$ and define the distance function to $N$ by $r_N(\cdot) := d(\cdot, N)$, then introduce the diffusion on $\M$ with time-dependent infinitesimal generator
\begin{equation*}\label{fermi bridge infinitesimal generator}
	\frac{1}{2}\Delta_{\M} - \frac{\nabla r^2_N}{2(T-t)}\quad =\quad \frac{1}{2}\Delta_{\M} - \frac{r_N}{T-t} \frac{\partial}{\partial 			r_N},
\end{equation*}
where $\tfrac{\partial}{\partial r_N}$ denotes differentiation in the radial direction and $\Delta$ denotes the Laplace-Beltrami operator. This diffusion process is called a Fermi bridge (see \cite{thompson2015submanifold,thompson2018brownian}). More generally, \cite{li2016generalised} defined generalised Brownian bridge processes, between $x_0$ and $x_T=v$ with terminal time $T$, as a Markov process $(x_t)_{t \geq 0}$ with infinitesimal generator
\begin{equation*}
	\tfrac{1}{2}\Delta_{\M} - f(t) \nabla \tfrac{r^2_{v}}{2},
\end{equation*}
where $f$ is a suitably smooth real valued function defined on $[0,1)$ satisfying $\lim_{t \uparrow T} f(t) = \infty$ and $\lim_{t \uparrow T} x_t = x_T$ almost surely. We also refer the reader to \cite{li2016hypo} for a description of hypoelliptic bridges on manifolds.

\section{A Girsanov change of measure}\label{sec:girsanov change of measure}
Intractable transition densities complicate exact simulation of the conditioned processes: For example, it is not possible to directly simulate from \eqref{eq:Brown_bridge} if $\tilde p$ cannot be computed to get the score and the corresponding drift. Various methods exist to approximate these processes \cite{bladt2014simple, bladt2016simulation, clark1990simulation, delyon_simulation_2006, schauer2017guided, van2017bayesian}. Common for all these method is that they rely on a change of measure argument and that the changed measure respects the original measure, in the sense of absolute continuity. 

In this section, we recall a Cameron-Martin-Girsanov result for manifold valued processes, see \cite{elworthy1988geometric}, that we will need later on. We assume throughout that $X_t = \pi(U_t)$ is non-explosive. This is for example the case if $\M$ is compact. Let $dM_t = \sigma_t(Z_t)dW_t$ be the martingale part of $Z$, where we have assumed that $\sigma$ is invertible. Define the local martingale $L_t$ by the equation 
    \begin{equation*}
L_t := -\int_0^t \left\langle 						\frac{\nabla^H_{u|_{u=U_s}} \tilde r^2_v(u)}{2(T-s)},  U(e_i) \right\rangle dM_s^i.
    \end{equation*}
Using that $U$ is an isometry, the local martingale is identical to the process
	\begin{equation*}
L_t := -\int_0^t \left\langle 						\sigma^{-1}_s(\tilde h_u^{-1}(U_s))U_s^{-1}\left(\frac{\nabla^H_{u|_{u=U_s}}\tilde r^2_v(u)}{2(T-s)}\right),  e_j \right\rangle dW_s^j.
		\end{equation*}
The corresponding Radon-Nikodym derivative is given by
    \begin{equation}\label{eq:radon-nikodym derivative for radial semimartingale}
	D_t = \exp\bigg[L_t
		 - \frac{1}{2}
		 \int_0^t \norm[\bigg]{									\sigma^{-1}_s(\tilde h_u^{-1}(U_s))U^{-1}_s\left(\frac{\nabla^H_{u|_{u=U_s}}\tilde r^2_v(u)}{2(T-s)}\right)}^2ds \bigg].
	\end{equation}
The Novikov condition ensures that the measure $\Q_t$, defined as $d\Q_t = D_t d\Pb$, is equivalent to the measure $\Pb$ on the time interval $[0,T)$. As a consequence of Girsanov's theorem, we have that the $\Pb_t$-Brownian motion, $W_t$, satisfies the equation
	\begin{equation*}
dW_t = d\hat{W}_t - \sigma^{-1}_t(\tilde h_u^{-1}(U_t))U_t^{-1}\left(\frac{\nabla^H_{u|_{u=U_s}} \tilde r^2_v(u)}{2(T-t)}\right)dt,
	\end{equation*}
and we have that $\hat W_t$ is a $\Q_t$-Brownian motion. Plugging the expression for $W_t$ into equation \eqref{eq:horizontal semimartingale sde} yields the SDE 
			\begin{equation}
						dU_t 
						=
						 H_i(U_t) \circ \biggl(dZ^i_t - 						\left(U_t^{-1}\left(\frac{\nabla^{H}\tilde{r}^2_v(U_t)}						{2(T-t)}\right)\right)^i dt\biggr) .
						 \label{eq:radial bridge horizontal sde}
			\end{equation}
We will consider solutions to this SDE and its projection to $\M$ throughout the paper.
We term the solution of \eqref{eq:radial bridge horizontal sde} the \textit{radial bridge process}.
The added drift can equivalently we written $-\frac{H_i \tilde r^2_v(U_t)}{2(T-t)}$
because
$
        \left(U_t^{-1}\left(\pi_*\nabla^H \tilde r_v^2(U_t)\right)\right)^i = \left\langle \nabla r_v^2(X_t) , U_t(e_i)\right\rangle = H_i \tilde r_v^2(U_t)$.

	\begin{example}
	Consider the case where the driving semimartingale is a standard Brownian motion. The Radon-Nikodym derivative \eqref{eq:radon-nikodym derivative for radial semimartingale} simplifies to
		\begin{equation}\label{eq: radon-nykodym derivative for BM}
    	D_t := \exp\bigg[-\int_0^t \frac{r_N(X_s)}{T-s}\bigg\langle 			\frac{\partial}{\partial r_N}, U_sdB_s\bigg\rangle - \frac{1}{2}		\int_0^t \frac{r_N^2(X_s)}{(T-s)^2}ds\bigg],
        \end{equation}
almost surely, since the norm of the gradient of the radial process is constantly one away from the cut locus and its starting point, $\Cut(x_0)\cup \{x_0\}$, and since this set is polar for $X$. In particular, as mentioned in \cite{thompson2018brownian}, the integral 
	\begin{equation*}
		\int_0^t\bigg\langle \frac{\partial}{\partial r_N}, U_sdB_s\bigg\rangle = \beta_t
	\end{equation*}
is a one-dimensional standard Brownian motion, which follows from Levy's characterization theorem of Brownian motions and that $U$ consists of isometries.
	\end{example}

\section{Bridge simulation on manifolds}\label{sec:2 main results - bride simulation on manifolds}
This section initiates horizontal semimartingales from Euclidean-valued semimartingales and describes the guided semimartingales considered in this paper. Throughout, we assume that $Z = (Z^1, Z^2,...,Z^d)$ is a Euclidean-valued semimartingale given by
	\begin{equation}\label{eq:z semimartingale}
	dZ_t^k = a^k(t,Z_t)dt + \sigma_{m}^k(t,Z_t)dW_t^m,
	\end{equation}	
where $W$ is a Euclidean-Brownian motion and $a$ and $\sigma$ are suitably integrable maps. Horizontal semimartingales are then obtained as solutions to \eqref{eq:horizontal semimartingale sde} which evidentally happens if and only if, for all smooth functions $\tilde{f}$ on $\cOM$, it holds that
	\begin{equation*}\label{eq:solution to sde on frame bundle for all smooth functions}
	d\tilde{f}(U_t) = H_i \tilde{f}(U_t) \circ dZ^i_t.
	\end{equation*}
By the It\^o-Stratonovich conversion, we obtain the corresponding It\^o equation
	\begin{equation*}
	d \tilde f(U_t) 
	= 
	 \cOML_u \tilde{f}(U_t) dt + \sigma_m^k (t,\tilde h_u^{-1}(U_t))H_k\tilde{f}(U_t)dW^m_t,
	\end{equation*}
where $\cOML_u$ is an operator in $\cOM$ defined by
	\begin{equation}\label{eq:horizontal diffusion operator}
	\cOML_u \tilde f(U_t) := a^k(t,\tilde h_u^{-1}(U_t))H_k \tilde f(U_t) + \tfrac{1}{2} C^{ij}(t,\tilde h_u^{-1}(U_t))H_iH_j\tilde f(U_t),
	\end{equation}
with initial frame $u \in \cOM$, and where $C^{ij} := \left(\sigma \sigma^T\right)_{ij}$ denotes the $ij$'th entrance of $\sigma\sigma^T$.
By definition of Bochner's horizontal Laplacian, $\Delta_{\cOM}$, whenever $\sigma$ is orthogonal \eqref{eq:horizontal diffusion operator} simplifies to $\cOML =  \tilde{V} + \tfrac{1}{2}\Delta_{\cOM}$, where $\tilde V$ is the horizontal lift of a vector field on $\M$.
Adding a drift term pointing in the radial direction gives rise to a new operator on $\cOM$
	\begin{equation}\label{eq:guided diffusion operator}
	\cOML_{u} - \frac{\nabla^H \tilde{r}_v^2}{2(T-t)}.
	\end{equation}
We further define $\cL_u$ as the operator in $\M$ that satisfies $\cOML_u \tilde f(u) = \cL_u f(x)$, for all $f \in C^{\infty}(\M)$, where $\tilde f = f \circ \pi$. As we shall see below, this added drift term acts as a guiding term that forces the process towards its target. 
The operator \eqref{eq:guided diffusion operator} on $\cOM$ can equivalently be expressed as the generator of the SDE \eqref{eq:radial bridge horizontal sde}.

\begin{example}
Whenever $C$ is the identity matrix and $a$ vanishes, i.e., when $Z$ is a standard Brownian motion, $\cOML_u$ becomes the horizontal Laplacian, $\Delta_{\cOM}$, identified by
	\begin{equation*}
	\Delta_{\cOM} \tilde{f}(u) = \Delta_{\M} f(x),
	\end{equation*}
for all $f \in C^{\infty}(\M)$, where $\tilde{f} = f \circ \pi$ is the lift of $f$ to $C^{\infty}(\cOM)$. 
\end{example}

\subsection{Guided bridges}\label{sec: the main result}
We can now establish a method to simulate conditioned semimartingales on a Riemannian manifold $\M$, starting from some fixed initial point $x_0 \in \M$ and conditioned to be at $v \in \M$ at time $T$, for $T > 0$. Utilizing stochastic development, we obtain manifold valued semimartingales from Euclidean valued ones by solving SDEs on the orthonormal frame bundle, $\cOM$. Throughout, we assume a strong solution to \eqref{eq:horizontal semimartingale sde}.

We recall from Girsanov's theorem that any $\Pb$-local martingale $M_t$ becomes a $\Qb$-local martingale $\tilde{M}_t$ under a suitable change of measure (see e.g. \cite{revuzyor}). Consequently, approximating the measure of a conditioned process relates naturally to Girsanov's theorem.

Define $A := C^{-1} = (\sigma \sigma^T)^{-1}$ and the function $g \colon \R_+ \times \R_+ \times \R^d \times \R^d \rightarrow \R$ by
    \begin{equation}\label{eq:g function}
        g(t,r,z,\xi) := \frac{r^2}{T-t}\lVert \sigma^{-1}_t(z)\xi\rVert^2.
    \end{equation}
Let $(D,\phi)$ be a normal neighborhood centered at $v \in \M$. On the set $\phi(D)$ the function $g$ is smooth. The following lemma provides an It\^o expansion of $g$.

\begin{lemma}\label{lemma: g function}
Let $\xi = U^{-1}(\nabla r_v)$. On the set $\phi(D)$, $g$ as defined in \eqref{eq:g function} has the It\^ o SDE expression
\begin{align*}
& dg(t,r_t,Z_t,\xi_t) = E(t) dt  + \frac{r_t^2}{T-t}\xi^T_t d\left(A(t,Z_t)\right)\xi_t + F(t) dr_t + J_i(t) d\xi^i_t \nonumber \\& \quad +  \frac{1}{2}\left(G(t) d[r,r]_t +  J_{ij}(t)d[\xi^i,\xi^j]_t  \right) + H_i(t)d[r,Z^i]_t + I_j(t) d[r,\xi^j]_t+K_{ij}(t)d[\xi^i,Z^j]_t
\end{align*}
where 
\begin{align*}
    &A(t,z):=\big(\sigma_t(z)\sigma_t(z)^T)^{-1};\ 
	E(t) 
	:= \partiel t g(t,r,z,\xi)=
	 \frac{r^2}{(T-t)^2}\norm{\sigma^{-1}_t(z)\xi}^2; 	
     \\
    &F(t) 
        := \partiel r g(t,r,z,\xi) 
	= 
	2\frac{r}{T-t}\norm[\big]{ \sigma^{-1}_t(z)\xi}^2;\ 
	G(t) 
	:= \frac{\partial^2}{(\partial r)^2} g(t,r,z,\xi) =   
	2\frac{1}{T-t}\norm[\big]{ \sigma^{-1}_t(z)\xi}^2; 
    \\ 
           &H_j(t) 
    :=
	\frac{\partial^2}{\partial r \partial z_j}
	g(t,r,z,\xi)
	= 	
	2\frac{r}{T-t}\xi^T\frac{\partial}{\partial z_j}\left(A(t,z)\right)\xi;
	\\	
           &I_j(t) 
	 := 
	 \frac{\partial^2}{\partial r \partial \xi_j}g(t,r,z,\xi)=
	4\frac{r}{T-t}\xi^TA(t,z)e_j; 
	\ J_j(t) 
	:= \frac{\partial}{\partial \xi_j}g(t,r,z,\xi) 
	 2\frac{r^2}{T-t}\xi^TA(t,z)e_i;
	 \\
           &J_{ij}(t) 
	  :=\frac{\partial^2}{\partial \xi_i \partial \xi_j}g(t,r,z,\xi) =  
	  2\frac{r^2}{T-t}e_jA(t,z)e_j; 
      \\ 
           &K_{ij}(t) 
      := \frac{\partial^2}{\partial \xi_i \partial z_j}g(t,r,z,\xi) =  
	 2\frac{r^2}{T-t}\xi^T\frac{\partial}{\partial z_j}\left(A(t,z)\right)e_i; 
\end{align*}
\end{lemma}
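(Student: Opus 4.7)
The plan is to apply the multidimensional Itô formula to $g$ viewed as a smooth function on the open set where $X_t$ remains in the normal neighborhood $D$ centered at $v$, which ensures that $r_v$ is smooth there and $A(t,z)$ is smooth by the standing bounded/differentiable assumptions on $\sigma$. The proof is purely computational, and the main work is bookkeeping of partial derivatives and of cross-variation terms.

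First, I would rewrite $g(t,r,z,\xi) = \tfrac{r^2}{T-t}\,\xi^T A(t,z)\xi$ so that the dependence on $z$ enters only through the matrix $A(t,z)=(\sigma_t(z)\sigma_t(z)^T)^{-1}$, and the dependence on $r$ and $\xi$ has the simple polynomial forms $r\mapsto r^2$ and $\xi\mapsto \xi^T A \xi$. From this product form the partial derivatives listed in the lemma follow by elementary calculus: $E=\partial_t g$, $F=\partial_r g$, $G=\partial_r^2 g$, $J_i=\partial_{\xi_i}g$, $J_{ij}=\partial_{\xi_i\xi_j}^2 g$, $H_j=\partial_{r z_j}^2 g$, $I_j=\partial_{r\xi_j}^2 g$, and $K_{ij}=\partial_{\xi_i z_j}^2 g$. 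I would simply record each derivative once using the product rule; the only subtlety is the convention whether $\partial_t g$ collects only the explicit $(T-t)^{-1}$-dependence (which is what the stated expression for $E$ reflects) while leaving the $t$-dependence of $A(t,Z_t)$ inside the term $\tfrac{r^2}{T-t}\xi^T d(A(t,Z_t))\xi$.

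Second, I would apply Itô's formula to the semimartingale $(t,r_t,Z_t,\xi_t)$. Rather than expanding the $z$-contributions as $\partial_{z_i}g\,dZ^i$ together with a pure second-variation term $\tfrac12 \partial_{z_i z_j}^2 g\,d[Z^i,Z^j]$, I would repackage these into a single expression $\tfrac{r_t^2}{T-t}\xi_t^T d(A(t,Z_t))\xi_t$, which by Itô's formula applied to the matrix-valued process $A(t,Z_t)$ automatically contains both the first-order $dZ^i$ contribution and the pure quadratic variation in $Z$. This repackaging is justified because $r$ and $\xi$ enter only as outer factors in the quadratic form and so commute with the differential of the middle argument.

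Finally, the remaining mixed cross-variations $d[r,Z^i]$, $d[r,\xi^j]$, and $d[\xi^i,Z^j]$ are kept explicit with coefficients $H_i$, $I_j$, and $K_{ij}$ respectively, and the pure second variations in $r$ and $\xi$ contribute $\tfrac12 G\,d[r,r]$ and $\tfrac12 J_{ij}\,d[\xi^i,\xi^j]$. Collecting these yields the stated expansion. The only potential obstacle is ensuring that the decomposition via $d(A(t,Z_t))$ is internally consistent with retaining $Z^i$ in the cross-brackets $d[r,Z^i]$ and $d[\xi^i,Z^j]$ rather than brackets with components of $A$; this is a matter of convention since $d[r,A_{k\ell}(t,Z_t)] = \partial_{z_j}A_{k\ell}(t,Z_t)\,d[r,Z^j]$ by Kunita–Watanabe, and the coefficients $H_i,K_{ij}$ are written precisely so as to absorb the $\partial_{z_j}A$ factors appearing on the right-hand side.
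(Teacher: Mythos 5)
Your proposal is correct and takes the same approach as the paper, which itself dispenses with the lemma in a single line (``Follows from application of the multidimensional It\^o's formula''). Your additional observations---that $E$ captures only the explicit $(T-t)^{-1}$ time-dependence while the $t$- and $z$-dependence of $A(t,Z_t)$ is repackaged into the term $\tfrac{r_t^2}{T-t}\xi_t^T\,d(A(t,Z_t))\xi_t$, and that the mixed brackets $d[r,Z^i]$, $d[r,\xi^j]$, $d[\xi^i,Z^j]$ must be kept separate from the pure second variations---are exactly the bookkeeping details the paper elides, and they are consistent with the stated coefficients.
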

\begin{proof}
Follows from application of the multidimensional It\^ o's formula.
\end{proof}
\begin{remark}
In the case where the diffusion parameter $\sigma$ is the identity matrix, \eqref{eq:g function} simplifies to $g(t,r) = r^2/(T-t)$. In this case, the It\^o expression for $g$, by stochastic integration by parts, is simply 
\begin{equation*}
dg(t,r_t) = \frac{r^2_t}{(T-t)^2}dt + 2\frac{r_t}{(T-t)}dr_t + \frac{1}{(T-t)}dt = \frac{r^2_t}{(T-t)^2}dt + \frac{1}{(T-t)}dr_t^2.
\end{equation*}
\end{remark}	

We can now state the main result of the paper which is a generalization of Delyon \& Hu \cite{delyon_simulation_2006} to Riemannian manifolds.
\begin{theorem}\label{main result theorem}
		Suppose that $Z$ is a semimartingale defined by \eqref{eq:z semimartingale} and $U$ the horizontal semimartingale defined by \eqref{eq:horizontal semimartingale sde} with $X=\pi(U)$. Let $V$ be the solution to \eqref{eq:radial bridge horizontal sde} and let $Y = \pi(V)$ denote the canonical projection onto $\M$. The law of the process $(Y_t)_{0\leq t<T}$ is equivalent to the law of the conditioned process $(X_t)_{0 \leq t<T}$, conditioned at $X_T=v$.  We have that 
			\begin{equation}\label{conditional mean}
				\E[f(X)|X_T = v] =  \lim_{t \uparrow T} \frac{\E[f(Y) \varphi_t]}{\E[ \varphi_t]},
			\end{equation}
where the likelihood $\varphi_t$ has the form
		\begin{align}\label{eq: generalized phi term}
            & -2d\log \varphi_t = \frac{r_t^2}{T-t}\xi^T_t d\left(A(t,Z_t)\right)\xi_t + F(t) dr_t + J_i(t) d\xi^i_t \\& \quad +  \frac{1}{2}\left(G(t) d[r,r]_t +  J_{ij}(t)d[\xi^i,\xi^j]_t  \right) + H_i(t)d[r,Z^i]_t + I_j(t) d[r,\xi^j]_t+K_{ij}(t)d[\xi^i,Z^j]_t.
\nonumber 
		\end{align}
\end{theorem}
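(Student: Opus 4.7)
The plan is to combine Doob's $h$-transform representation of the conditioned process with the Cameron--Martin--Girsanov change of measure from \refsec{sec:girsanov change of measure}, and then to identify the resulting likelihood ratio with $\varphi_t$ by matching the singular parts coming from the transition density of $X$ against those coming from the radial drift added to obtain $Y$. For $t<T$, Doob's $h$-transform gives $d\Pb^{X\mid X_T=v}/d\Pb^X\big|_{\cF_t} = p(T-t,X_t,v)/p(T,X_0,v)$, while \eqref{eq:radon-nikodym derivative for radial semimartingale} gives $d\Pb^Y/d\Pb^X\big|_{\cF_t} = D_t$. Hence, formally, the Radon--Nikodym derivative of the conditioned law with respect to the guided law on $\cF_t$ is
\begin{equation*}
\Lambda_t \;=\; \frac{p(T-t,Y_t,v)}{p(T,X_0,v)\,D_t}
\end{equation*}
when evaluated along $Y$. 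The goal is to show that $\Lambda_t$ factors into a deterministic diverging prefactor in $T-t$ times an explicit It\^o process $\varphi_t$ satisfying \eqref{eq: generalized phi term}, so that the denominator $\E[\varphi_t]$ in \eqref{conditional mean} absorbs the diverging prefactor.

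I would use the small-time asymptotics \eqref{equation: convergence of grad log heat kernel} (or its counterpart for the generator $\cL$) to extract the leading singularity of $\log p(T-t,Y_t,v)$, writing
\begin{equation*}
\log p(T-t,Y_t,v) \;=\; -\tfrac{1}{2}g(t,r_t,Z_t,\xi_t) \;+\; \Phi(t) \;+\; R(t,Y_t),
\end{equation*}
where $\Phi$ is a deterministic function of $T-t$ (carrying the $(T-t)^{-d/2}$ factor) and $R$ is bounded on compacts of $\M\setminus\Cut(v)$. Applying \reflemma{lemma: g function} to expand $dg$, the martingale part of $-\tfrac12 g$ cancels, up to sign, the martingale $L_t$ appearing in $\log D_t$, while the $\tfrac12\int_0^t E(s)\,ds$ piece of $-\tfrac12 g$ cancels the Girsanov compensator $\tfrac12[L]_t = \tfrac12\int_0^t E(s)\,ds$ in $-\log D_t$. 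What survives in $\log\Lambda_t$ is precisely the sum of the remaining terms in the expansion of $dg$ from \reflemma{lemma: g function}, so $-2\,d\log\Lambda_t$ matches the right-hand side of \eqref{eq: generalized phi term} up to the deterministic piece $d\Phi(t)$; this identifies $\varphi_t$ as the $\Phi$-renormalized version of $\Lambda_t$.

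Equivalence of $\Pb^Y$ and $\Pb^{X\mid X_T=v}$ on $\cF_t$ for $t<T$ then follows from the equivalence $\Pb\sim\Qb_t$ established in \refsec{sec:girsanov change of measure} together with Doob's $h$-transform. For \eqref{conditional mean}, the limit $t\uparrow T$ relies on the almost sure convergence $Y_t\to v$ proved in \refsec{sec:radial part of semimartingale} together with a dominated convergence argument on $\E[f(Y)\varphi_t]$ and $\E[\varphi_t]$; the common factor $\Phi(t)\,p(T,X_0,v)^{-1}$ cancels between numerator and denominator after normalization by $\E[\varphi_t]$.

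The main obstacle is the cut locus of $v$: the squared distance $r_v^2$ is not $C^1$ on $\Cut(v)$, so \reflemma{lemma: g function} cannot be applied directly at those times, and \eqref{equation: convergence of grad log heat kernel} is only uniform on compact subsets of $\M\setminus\Cut(v)$. I would resolve the first issue via the Barden--Le decomposition of the radial semimartingale recalled in \refsec{sec:radial part of semimartingale}: the extra non-decreasing local-time-like process it introduces is supported on $\{Y_t\in\Cut(v)\}$, which is a Lebesgue null set by assumption, and therefore contributes no absolutely continuous drift to $dg$. For the second issue, controlling $R(t,Y_t)$ as $Y_t\to v$ requires verifying that the guided process approaches $v$ through a tubular neighborhood disjoint from $\Cut(v)\setminus\{v\}$ in the terminal regime, which is again supplied by the convergence results of \refsec{sec:radial part of semimartingale}.
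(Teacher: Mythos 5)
Your proposal takes a genuinely different route from the paper, and the difference matters. The paper's proof never invokes Doob's $h$-transform or the transition density $p(t,x,v)$: it works entirely with the Girsanov change of measure between the unconditioned process $X$ and the guided process $Y$, encoded by $D_t$ in \eqref{eq:radon-nikodym derivative for radial semimartingale}. Applying \reflemma{lemma: g function} and \reflemma{lemma:sde for Y} to the function $g(t,r_t,Z_t,\xi_t)$, the compensator $\tfrac12\int_0^t E(s)\,ds$ in $\log D_t$ is traded for an exact expression in which $D_t$ factors into the Gaussian-type kernel $\psi_t=\exp[-\tfrac12 g]$ times $\varphi_t^{-1}$ (up to a deterministic normalization in $T-t$). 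The crucial and elementary step is then \reflemma{lemma conditional expectation}, which shows via the change of variable $x=(T-t)^{1/2}y$ in a normal chart that $\E[f(X)\psi_t]/\E[\psi_t]\to\E[f(X)|X_T=v]$; this is what turns $\psi_t$ into an approximate delta at $v$ without ever expanding $\log p$. In your version, the same $\psi_t$-mass concentration is implicitly delegated to the small-time heat kernel asymptotics, which is a heavier tool and a different mechanism.

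The main gap in your plan is the decomposition $\log p(T-t,Y_t,v) = -\tfrac12 g(t,r_t,Z_t,\xi_t) + \Phi(t) + R(t,Y_t)$ with $R$ bounded on compacts of $\M\setminus\Cut(v)$. Equation \eqref{equation: convergence of grad log heat kernel} only gives $t\nabla^m_x\log p(t,x,v)\to-\nabla^m_x d(x,v)^2/2$ as $t\to0$; it does not by itself separate out a purely deterministic $\Phi$ nor control the remainder $R$ uniformly as $Y_t\to v$. For a Brownian motion one could invoke Minakshisundaram--Pleijel-type expansions, but for a general semimartingale driven by \eqref{eq:z semimartingale} with state-dependent $\sigma$ and drift $a$, a smooth transition density with the specific leading order $-\tfrac12 g$ is not granted by anything cited in the paper. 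This is exactly why the paper replaces the $h$-transform route by the identity $\E[f(X)\psi_t]=C_t\E[f(Y)\varphi_t]$ — it needs no density expansion, only a change of variable in a single chart. Similarly, your treatment of the cut locus (``it contributes no absolutely continuous drift to $dg$'') is too quick: the Barden--Le decomposition introduces a genuine local-time term in $dr_t$, and this term must, and does, survive into $\varphi_t$ (compare the $dL_s$ contribution in the Brownian-case expression for $\varphi_t$ in \refthm{main result theorem for BM}); it cannot be discarded on the grounds that $\{Y_t\in\Cut(v)\}$ is Lebesgue null, because local time is by construction singular with respect to Lebesgue measure. Your cancellation picture (leading singularity of $\log p$ against the Girsanov compensator) is morally the right intuition, but turning it into a proof requires the asymptotic control that the paper's argument was precisely designed to avoid.

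Two smaller remarks. First, for the general semimartingale theorem the paper only asserts the limit form \eqref{conditional mean}, not an identity $\E[f(X)|X_T=v]=C\,\E[f(Y)\varphi_T]$; your final dominated-convergence step quietly upgrades to the latter, which in the paper is established (via uniform integrability of $\varphi_t$) only in the Brownian case, \refthm{main result theorem for BM}. Second, the identity you would need at the end, namely that the R--N derivative $\Lambda_t$ is the product of a deterministic prefactor and $\varphi_t$, is exactly the algebraic identity hidden inside \reflemma{lemma: g function}; you should verify explicitly how the martingale part of $F(t)dr_t$ in the expansion of $dg$ matches $-2L_t$, which in the Brownian case works because $\|\sigma^{-1}\xi\|=1$, but needs a careful bookkeeping of the $d\xi^i$ and $d[r,\xi^j]$ terms in the general case.
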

\begin{proof}
Here, we present the main structure of the proof. The constituent parts will be made rigorous in the sections that follow. By a proper change of measure, through Girsanov's theorem, one obtains the SDE \eqref{eq:radial bridge horizontal sde} from \eqref{eq:horizontal semimartingale sde}. The change of measure is valid on the interval $[0,t]$, for every $t \in [0,T)$. A decomposition of the Radon-Nikodym derivative leads to a term only involving the radial process and the time to the termination. By invoking an $L^2$ bound on the radial bridge process, we show the radial bridge process's almost sure convergence to the desired point (\ref{sec:almost sure convergence}). The result is concluded by an argument (\refsec{sec:proof of main result}) similar to Delyon \& Hu \cite[~Lemma 7]{delyon_simulation_2006}
\end{proof}

\subsection{The case of Brownian motion}\label{sec: Brownian motion case}
A special case of the above result is when the $X$ is a Brownian motion. We state the result separately below as it is simpler and because, in this case, we can remove the limit on the right-hand side of \eqref{conditional mean}. Let $\Theta_x(y) = |\det D_y \Exp_x|$ be the Jacobian determinant at $y$ of the Jacobi field along the geodesic from $x$ to $y$.

\begin{theorem}\label{main result theorem for BM}
	Let $U$ be the solution of \eqref{eq:horizontal semimartingale sde}, with $Z$ a standard Brownian motion in $\R^d$ and let $X$ be the canonical projection onto $\M$. Furthermore, let $V$ be the solution of \eqref{eq:radial bridge horizontal sde}.		
 the conditioned law of $X$ given $X_T = v$ is absolutely continuous with respect to the law of $Y$ on $[0,t]$, for all $t < T$.
	 With the notation as in \refthm{main result theorem}, we have 
		\begin{equation*}\label{eq: conditional mean for BM}
			\E[f(X)|X_T = v] = C \E\left[f(Y)\varphi_T\right],
		\end{equation*}
	where $C>0$ is a constant. In particular, the likelihood ratio $\varphi_t$ in \eqref{eq: generalized phi term} simplifies to
	\begin{equation*}
		d\log \varphi_t = 
\frac{r(Y_t)}{T-t} \left(d\eta + dL_s\right),
	\end{equation*}
	where 
\begin{equation*}
d\eta_s = \frac{\partial}{\partial r} \log \Theta_v^{-\frac{1}{2}}ds
\end{equation*}
is supported on $\M\backslash \Cut(v)$ and $L_s$ is the geometric local time at $\Cut(v)$.
			
\end{theorem}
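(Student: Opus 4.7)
The plan is to apply Theorem \ref{main result theorem} in the special case where $Z$ is a standard Brownian motion, so that $\sigma\equiv I$, $a\equiv 0$, and $A := (\sigma\sigma^T)^{-1} = I$, and to simplify the generalized likelihood \eqref{eq: generalized phi term} using the geometry of the radial process.

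First, I would exploit the fact that $A$ is constant: the increment $d(A(t,Z_t))$ vanishes, killing the leading term on the right-hand side of \eqref{eq: generalized phi term}, and $\partial_{z_j}A \equiv 0$ eliminates the cross-variation terms $H_i(t)\,d[r,Z^i]_t$ and $K_{ij}(t)\,d[\xi^i,Z^j]_t$. Because $U_t$ is an isometry and $|\nabla r_v|=1$ on $\M\setminus\Cut(v)$, the vector $\xi_t := U_t^{-1}\nabla r_v(Y_t)$ has unit norm almost surely under the standing assumption that $\{t:Y_t\in\Cut(v)\}$ is a Lebesgue null set, so the coefficients $J_j$, $J_{ij}$ and $I_j$ collapse to scalar multiples of $r_t/(T-t)$ and $r_t^2/(T-t)$. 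What remains in $-2\,d\log\varphi_t$ is then expressible purely in terms of $dr_t$, $d[r,r]_t$ and $d\xi^i_t$.

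Next, I would apply the It\^o--Barden--Le formula \cite[Theorem 3]{le1995ito} to the radial process $r(Y_t)=d(Y_t,v)$: away from $\Cut(v)$ the function $r_v$ is smooth, the Jacobi-field identity $\tfrac12\Delta r_v^2 = d + r_v\,\partial_r\log\sqrt{\Theta_v}$ converts the It\^o correction into the absolutely continuous drift $\tfrac{r(Y_t)}{T-t}\,d\eta_t$ with $d\eta_t = \partial_r\log\Theta_v^{-1/2}\,dt$, and the singular contribution at the cut locus is captured by a non-negative geometric local time $L_t$, producing the $\tfrac{r(Y_t)}{T-t}\,dL_t$ term. Combining the surviving pieces, \eqref{eq: generalized phi term} collapses to the stated expression for $d\log\varphi_t$. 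To drop the limit in \eqref{conditional mean}, I would combine the almost-sure convergence $Y_t\to v$ from Section \ref{sec:radial part of semimartingale} with the $L^2$ bound on the radial bridge to show $\varphi_t\to\varphi_T$ in $L^1$; dominated convergence then yields $\E[f(X)\mid X_T=v] = C\,\E[f(Y)\varphi_T]$ with $C = 1/\E[\varphi_T] \in (0,\infty)$, the positivity following from the strict positivity of the heat kernel at $v$ and the matching Gaussian prefactor in the short-time asymptotic \eqref{equation: convergence of grad log heat kernel}.

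The main obstacle is the cut-locus analysis: Lemma \ref{lemma: g function} supplies the It\^o expansion only on a normal neighbourhood of $v$ where $g$ is smooth, so the derivation must be localised by stopping times $\tau_n$ keeping $Y$ uniformly away from $\Cut(v)$ and then passed to the limit. The singular contribution extracted by Barden--Le's theorem must be correctly identified as the geometric local time $L_t$, and the handoff between the smooth regime and the cut-locus regime has to be shown compatible with the $L^1$-convergence $\varphi_t\to\varphi_T$; this is the one point where more care is required than in the Euclidean Delyon--Hu argument, precisely because in $\R^d$ there is no cut locus and hence no local-time term to track.
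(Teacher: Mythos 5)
Your proposal follows the same overall architecture as the paper's proof: specialize to the Brownian case, use the Barden--Le/Kendall decomposition of the radial process to identify the absolutely continuous contribution $d\eta$ and the singular local-time contribution $dL$, and pass $t\uparrow T$ using a.s.\ convergence plus $L^1$ convergence of $\varphi_t$ to drop the limit in \eqref{conditional mean}. However, your route of specializing the general expansion from \reflemma{lemma: g function} term by term is more laborious than what the paper actually does. When $\sigma\equiv I$ the function $g$ only depends on $r$ and $t$ through $\|\xi\|^2=1$, so the paper simply uses the remark after \reflemma{lemma: g function} and applies geometric It\^o directly to $g(t,r)=r^2/(T-t)$, feeding in Kendall's formula \eqref{radial representation by kendall} for $dr_t$. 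Your route does arrive at the same $d\log\varphi_t$, but you would still need to verify that the surviving $\xi$-terms cancel: with $A=I$ one has $J_i\,d\xi^i=2\tfrac{r^2}{T-t}\xi^T d\xi$ and $\tfrac12 J_{ij}d[\xi^i,\xi^j]=\tfrac{r^2}{T-t}\mathrm{tr}\,d[\xi,\xi]$, which annihilate each other precisely because $d(\xi^T\xi)=0$, and similarly $\xi^T d[r,\xi]=\tfrac12 d[r,\xi^T\xi]=0$ kills the $I_j\,d[r,\xi^j]$ term. You state these collapse without spelling out the cancellations; they are not just ``scalar multiples'' that trivially drop but require these constancy-of-norm identities.

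The one genuine gap is the passage to $L^1$ convergence of $\varphi_t$. You propose to deduce $\varphi_t\to\varphi_T$ in $L^1$ from the $L^2$ bound on the radial process (\refthm{L2 bound of radial process}) together with dominated convergence, but that bound controls $\E[r_v^2(Y_t)]$, not the likelihood $\varphi_t=\exp[\int_0^t \tfrac{r_s}{T-s}(d\eta_s+dL_s)]$ itself. Control of $r_v(Y_t)$ gives a.s.\ convergence (which the paper already obtains in \refprop{prop:almost sure convergence}), but it does not supply a dominating function or uniform integrability for the family $\{\varphi_t\}$, which involves time-integrals against $d\eta$ and the local-time measure $dL$ up to $t=T$. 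The paper's \reflemma{lemma: L1 convergence} handles this by appealing to a separate uniform-integrability estimate for $\{\varphi_t:t\in[0,T)\}$ (Thompson's thesis, Appendix C.2) precisely because it cannot be read off from the radial $L^2$ bound alone. Your sketch would need to import that estimate or prove an equivalent domination to close the argument.
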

\begin{example}
    In the situation where $\M = \R^d$ and $X$ is a Brownian motion, the result matches Theorem 5 of Delyon \& Hu \cite{delyon_simulation_2006}. 
    No curvature or cut locus exists in the Euclidean setting and, therefore, the likelihood ratio $\varphi_t=1$. 
\end{example}
	
Below, we prove the remaining parts of the two theorems. In particular, we shall derive the expression of $\varphi_t$ in \eqref{eq: generalized phi term} as well as review the behavior of the radial process of a continuous semimartingale on $\M$.

\section{Radial part of manifold valued semimartingales}\label{sec:radial part of semimartingale}
To analyze the guided process behavior, we need to control the radial part of the guided process. 
Barden and Le described the behavior of the radial process for semimartingales on manifolds \cite{barden1997some,le1995ito}, generalizing a result by Kendall \cite{kendall1987radial}, which describes the radial behavior of a Brownian motion. To take full advantage of Barden and Le's result, we describe the geometry of the cut locus and present their result.

Let $x_0$ be a given reference point on a complete Riemannian manifold $\M$. We let $\mathcal{Q}(x_0)$ denote the set of points in $T_{x_0}\M$ where the exponential map is singular, that is, points $v \in T_{x_0}\M$ such that $D(\exp_{x_0})(v)$ has rank $k < d$ (for a $d$ dimensional manifold $\M$). Define $Q(x_0):=\exp_{x_0}(\mathcal{Q}(x_0))$ as the image under $\exp_{x_0}$. We say that $\mathcal{Q}(x_0)$ and $Q(x_0)$ are the conjugate loci of $T_{x_0}\M$ and $\M$, respectively. Let $\Cut(x_0)$ denote the cut locus of $x_0$ in $\M$. Then $\Cut(x_0) \cap Q(x_0)$ denote the conjugate part of the cut locus of $x_0$.

A $(d-1)$-dimensional submanifold $S \subset \M$ is called two-sided if its normal bundle is trivial. By Barden and Le \cite[Theorem 2]{le1995ito}, the cut locus, except for a subset of Hausdorff $(d-1)$-measure zero,  is described as a countable disjoint union, $\mathscr L$, of open two-sided $(d-1)$-dimensional submanifolds.
Define $E$ to be the union of points consisting of $\Cut(x_0) \cap Q(x_0)$ and the points in $\Cut(x_0) \backslash Q(x_0)$ where at least $3$ minimal geodesics to $x_0$ exists. The set $E$ has Hausdorff $(d-1)$-measure zero. A set $A$ is said to be a polar set for $X$ if the first hitting time of $A$ by $X$ is almost surely infinite. 

Suppose that the process $U$ is the solution of \eqref{eq:horizontal semimartingale sde}, that $E$ is a polar set of $X$, and that $\Pb(X_t = x, 0 < t \leq \infty) = 0$. Furthermore, let $\mathring C$ be a disjoint union of $\Cut(v)$, which consist of countably many smooth connected two-sided $(d-1)$-dimensional submanifolds, where $\mathring C_i$ denote the connected components of $\mathring C$, and let $D \subseteq \M$ be a regular domain. We denote by $D_\pm f_{x}(\pm \nu)$ the one-sided G\^ateaux derivatives of $f$, defined by
	\begin{equation*}
	D_+ f_{x}(\nu):= \lim_{\varepsilon \downarrow 0} \tfrac{1}{\varepsilon}\left(f\left(\exp_x(\varepsilon \nu)\right) - f(x)\right), \qquad D_- f_x(\nu) := - D_+ f_x(-\nu),
	\end{equation*}
for all $x \in \M$ and $\nu \in T_x\M$.
From \cite[Theorem 3]{le1995ito},  we get a formula for the radial process of $X= \pi(U)$ as (see also \cite{thompson2015submanifold} for a more general formula)
	\begin{align*}
	r_v\left(X_{t\wedge \tau_D}\right) =  r_v(x) 
	&+
	 \int_0^{t\wedge \tau_D} 1_{\{X_s \notin \mathring C\}} \left\langle \nabla r_v(X_s),U_sdZ_s\right\rangle
	 \\
	&+
	\int_0^{t\wedge \tau_D}1_{\{X_s \in \mathring C_i\}} \left\langle \nabla \left(r_v\circ P^i\right)(X_s),U_sdZ_s\right\rangle
	\\
	&+
	 \frac{1}{2}\int_0^{t\wedge \tau_D}1_{\{X_s \notin \mathring C\}} \Hess_{X_s} r_v(U_s,U_s)d[Z,Z]_t 
	 \\
	 &+ 
	 \frac{1}{2}\int_0^{t\wedge \tau_D}1_{\{X_s \in \mathring C_i\}} \Hess_{X_s} \left(r_v\circ P^i \right)(U_s,U_s)d[Z,Z]_t 
	 \\
	  &+ \frac{1}{2}\int_0^{t\wedge \tau_D} \left(D_+r_{X_s}(\nu)dL_s^{+\nu,\mathring C}(X) + D_+r_{X_s}(-\nu)dL_s^{-\nu,\mathring C}(X)\right) 
	  \\
	  &+ L^0_{t \wedge \tau_D}\left(r_v(X)\right),
	\end{align*}
for all $t \geq 0$, almost surely, where $\tau_D$ is the first exit time of $D$ by $X$, $P^i$ the orthogonal projection onto the connected component $\mathring C_i$ of $\mathring C$, $L^0$ the local time at $0$ of $r_v(X)$, $dL^{\pm \nu, \mathring C}$ the associated random measures of the  geometric local times at $\mathring C$ as defined in \cite{thompson2015submanifold}, and $D_\pm r_{X_t}(\pm \nu)$ are the one-sided G\^ateaux derivatives of $r$ along $\pm\nu$, the unit normal vector-field of $\Cut(x_0)\backslash E$.

Under the assumptions made in \refsec{sec:notation and convention}, the terms which depend on the projections vanish in the equation above. In this case, the formula for the squared radial process becomes
	\begin{align}\label{eq:squared radial formula}
	r_v^2\left(X_{t\wedge \tau_D}\right) =  r_v^2(x) \
	&+
	 \int_0^{t\wedge \tau_D}  \left\langle \nabla r_v^2(X_s),U_sdZ_s\right\rangle 
	+
	 \frac{1}{2}\int_0^{t\wedge \tau_D} \Hess_{X_s} r_v^2(U_s,U_s)d[Z,Z]_t 
     \nonumber
	 \\
	  &+ \frac{1}{2}\int_0^{t\wedge \tau_D} r_v(X_s)\left(D_+r_{X_s}(\nu) + D_+r_{X_s}(-\nu)\right)d\tilde L_s^{\mathring C}(X),
\end{align}
where $d\tilde L^{\mathring C}$ is the random measure carried by $\mathring C$,  associated to the continuous predictable non-decreasing functional process. Furthermore, as defined in \cite{barden1997some}, we have $D_+r_{X_s}(\nu) + D_+r_{X_s}(-\nu) \leq 0$ and so we define the random measure
\begin{equation*}
dL^{\mathring C}_t = -\left(D_+r_{X_s}(\nu) + D_+r_{X_s}(-\nu)\right)d\tilde L_s^{\mathring C}(X).
\end{equation*}

	\begin{example}
		The radial part of a Brownian motion, which is due to Kendall \cite{kendall1987radial}, has the following representation:
		\begin{equation}\label{radial representation by kendall}
	dr_t = \tfrac{1}{2}\Delta_M r_t dt + d\beta_t - dL_t,
\end{equation}
where $L_t$ is the local time at the cut locus and $\beta$ is a one-dimensional real valued standard Brownian motion.
	\end{example}

\subsection{Properties of the radial bridge}\label{sec:properties of the radial bridge}

The perhaps most fundamental property of any bridge proposal process is that it converges to the correct point almost surely. The radial bridge process with infinitesimal generator given by \eqref{eq:guided diffusion operator} has a drift term that always points in the radial direction. This drift acts as a pulling term in the radial direction, and it ensures that the radial bridge process converges to the desired endpoint. 
 
\subsubsection{Bound for the radial bridge}\label{sec:an L^2 bound for the radial bridge}
Throughout this section, let $U$ be the process generated by \eqref{eq:guided diffusion operator} and let $X = \pi(U)$ be the projection of $U$ on $\M$. Then by Barden and Le's formula \eqref{eq:squared radial formula} together with the geometric It\^ o formula for semimartingales, we have
	\begin{align*}
        &r_v^2(Y_{t \wedge \tau_D}) = \\&
        \quad r_v^2(x_0) + 2\int_0^{t \wedge \tau_D} r_v(Y_s)dN_s 
		- \int_0^{t \wedge \tau_D} r_v(Y_s) dL^{\mathring{C}}_s  
		  + \int_0^{t \wedge \tau_D} \left(\cL_z r^2_v(Y_s) -  2\frac{r^2_v(Y_s)}{T-s}\right)ds,
		\end{align*}
where $N_t$ is a one-dimensional Euclidean martingale defined by $N_t:= \int_0^t \sigma^k_m(s,Z_s) \left\langle \nabla r, U_s(e_k)\right\rangle dW^m_s$, and $\cL_z$ is defined in \eqref{eq:horizontal diffusion operator}. As the second term above is a local martingale, we have
	\begin{align}
    &\E\left[r_v^2\left(Y_{t \wedge \tau_{D_i}}\right)\right]  =  \\&
        \quad r_v^2(x_0) -  \E\left[\int_0^{t \wedge \tau_D} r_v(Y_s) dL^{\mathring{C}}_s\right]
	+ \E\left[\int_0^{t \wedge \tau_D} \cL_z r^2_v(Y_s) ds \right] \nonumber 
		-  2\E\left[\int_0^{t \wedge \tau_D}\frac{r^2_v(Y_s)}{T-s}ds\right]
	\end{align}
for all $t \in [0,T)$ almost surely. In particular, under the assumption of \eqref{assumption of bounded elliptic operator} below, the two last terms can be rewritten by Lebesgue's dominated convergence and Fubini's theorem such that
	\begin{equation*}\label{equation: mean of squared radial process}
        \begin{split}
    &\E\left[r_v^2\left(Y_{t \wedge \tau_{D_i}}\right)\right] 
        = \\&\quad
	r_v^2(x_0) -  \E\left[\int_0^{t \wedge \tau_D} r_v(Y_s) dL^{\mathring{C}}_s\right]
	+ \int_0^{t}\E\left[1_{(s < \tau_D)} \cL_z r^2_v(Y_s) \right]ds  
		-  2\int_0^{t}\E\left[1_{(s < \tau_D)}\frac{r^2_v(Y_s)}{T-s}\right]ds
        \end{split}
	\end{equation*}
for all $t \in [0,T)$, almost surely.

\begin{theorem}\label{L2 bound of radial process}(Adapted from theorem 3.1 in \cite{thompson2018brownian})
		Let $D$ be a regular domain (smooth boundary and compact closure) in $\M$, and $\tau_D$ be the first exit time of $Y$ from $D$. Suppose there exist constants $\nu \geq 1$ and $\lambda \geq 0$ such that
\begin{equation}\label{assumption of bounded elliptic operator}
	\cL_z r_v^2 \leq \nu + \lambda r^2_v
\end{equation}
on $ D \backslash \Cut(p)$. Then we have
\begin{equation}\label{mean squared radial bound}
	\E [1_{\{t < \tau_D\}} r_v^2(Y_t)] 
    \leq
    \left( r^2_v(x_0) + \nu t\left(\frac{t}{T-t}\right) \right)\left(\frac{T-t}{t}\right)^2 e^{\lambda t},
\end{equation}
for all $t \in [0,T)$.
\end{theorem}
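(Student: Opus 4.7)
The plan is to apply It\^o's formula to $h(t)\,r_v^2(Y_t)$ with the deterministic weight $h(t) = T^2(T-t)^{-2}e^{-\lambda t}$, use the hypothesis to make the $r_v^2$ part of the drift vanish \emph{exactly}, discard the non-positive cut-locus contribution, and conclude by taking expectations; no separate Gr\"onwall comparison is required.

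First, combining the Barden--Le expansion of $r_v^2(Y_t)$ displayed just above the theorem with the product rule gives
\begin{equation*}
d\!\left[h(t)\,r_v^2(Y_t)\right] = \left(h'(t)\,r_v^2(Y_t) + h(t)\mathcal{L}_z r_v^2(Y_t) - \tfrac{2h(t)}{T-t}r_v^2(Y_t)\right) dt + 2h(t)\,r_v(Y_t)\,dN_t - h(t)\,r_v(Y_t)\,dL^{\mathring C}_t,
\end{equation*}
where $N_t$ is the one-dimensional martingale from the radial decomposition. The weight $h(t) = T^2(T-t)^{-2}e^{-\lambda t}$ is the (normalised) positive solution of $h'(t) = (2/(T-t)-\lambda)\,h(t)$, so $h'(t) + \lambda h(t) - 2h(t)/(T-t) = 0$; combined with the hypothesis $\mathcal{L}_z r_v^2 \leq \nu + \lambda r_v^2$ (valid Lebesgue-almost everywhere by the null-set assumption on $\{t:Y_t\in \Cut(v)\}$ from Section~\ref{sec:notation and convention}), the entire $r_v^2$ drift cancels and one is left with $d[h(t)r_v^2(Y_t)] \leq \nu h(t)\,dt + 2h(t)r_v(Y_t)\,dN_t - h(t)r_v(Y_t)\,dL^{\mathring C}_t$.

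Second, stopping at $\tau_D$ so that $r_v$ and $h$ are bounded on $\bar D$ and the stochastic integral is a genuine martingale, taking expectations, and dropping the non-positive term $-\E\!\left[\int_0^{t\wedge\tau_D} h(s)r_v(Y_s)\,dL^{\mathring C}_s\right]$ (non-positive because $r_v,h\geq 0$ and $dL^{\mathring C}$ is a non-negative random measure), one obtains, with $f(t) := \E[\mathbf{1}_{\{t<\tau_D\}} r_v^2(Y_t)]$, the scalar inequality
\begin{equation*}
h(t)\,f(t) \leq \E\!\left[h(t\wedge\tau_D)\,r_v^2(Y_{t\wedge\tau_D})\right] \leq h(0)\,r_v^2(x_0) + \nu\!\int_0^t h(s)\,ds,
\end{equation*}
where the first step uses $\E[h(t\wedge\tau_D)r_v^2(Y_{t\wedge\tau_D})] \geq \E[\mathbf{1}_{\{t<\tau_D\}} h(t)r_v^2(Y_t)]$. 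Since $h(0)=1$, dividing by $h(t)$, bounding $\int_0^t e^{-\lambda s}(T-s)^{-2}\,ds \leq t/(T(T-t))$, and relaxing $1/T \leq 1/t$ and $t/T \leq 1$ (both valid since $t\leq T$) produces exactly $f(t) \leq \bigl(r_v^2(x_0) + \nu t\cdot t/(T-t)\bigr)\bigl((T-t)/t\bigr)^2 e^{\lambda t}$.

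The main obstacle I anticipate is \emph{identifying} the weight $h(t) = T^2(T-t)^{-2}e^{-\lambda t}$: it is the unique (up to scaling) positive solution of the first-order ODE $h'(t) = (2/(T-t)-\lambda)\,h(t)$ making the $r_v^2$ part of the drift vanish exactly, and spotting this cancellation is the crux of the argument. Once the weight is in hand, the rest reduces to It\^o calculus, linearity of expectation, and elementary estimates. A minor technicality is the singularity $h(t)\to\infty$ as $t\uparrow T$, but it only appears multiplicatively on both sides of the inequality and cancels after dividing; the normalisation $h(0)=1$ is what puts $r_v^2(x_0)$ (rather than some other constant) on the right-hand side.
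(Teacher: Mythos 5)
Your proof is correct and is essentially the paper's argument: the paper differentiates $f(t)=\E[\mathbf 1_{\{t<\tau_D\}}r_v^2(Y_t)]$, obtains the differential inequality $f'(t)\leq \nu+(\lambda-2/(T-t))f(t)$, and invokes Gr\"onwall, which is precisely your integrating factor $h(t)=T^2(T-t)^{-2}e^{-\lambda t}$ made explicit. Working at the SDE level with $h(t)r_v^2(Y_t)$ and then taking expectations is a slightly cleaner presentation, since it sidesteps the question of whether $f$ is absolutely continuous (the paper's ``their derivatives are non-negative'' observation gives an a.e.\ derivative, and the use of Gr\"onwall on $f$ is then a little loose), while your route only needs that the stopped stochastic integral has zero expectation, which you correctly justify by boundedness on $\bar D\times[0,t]$. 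One small remark: the relaxations $1/T\leq 1/t$, $t/T\leq 1$ you flag are exactly the slack between what the computation actually yields, namely $f(t)\leq\bigl(r_v^2(x_0)+\nu Tt/(T-t)\bigr)\bigl((T-t)/T\bigr)^2 e^{\lambda t}$, and the statement \eqref{mean squared radial bound}; the $t$ in the denominator of $\bigl((T-t)/t\bigr)^2$ in both the paper's displayed Gr\"onwall bound and the theorem statement appears to be a typographical slip for $T$, and your tighter estimate is the natural one (it is finite at $t=0$, whereas the stated bound blows up there).
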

	
\begin{proof}
Define
    $
	f(t) := \E\left[1_{\left(t < \tau_{D_i}\right)}r_v^2(Y_{t}) \right]
    $
and note that 
	\begin{equation*}
	 f(t) := \E\left[1_{\left(t < \tau_{D_i}\right)}r_v^2(Y_{t}) \right]= \E\left[r_v^2\left(Y_{t \wedge \tau_{D_i}}\right)\right] - \E\left[1_{\left(t \geq \tau_{D_i}\right)} r_v^2(Y_{\tau_{D_i}}) \right].
	\end{equation*}
Since the maps
	\begin{align*}
	t & \mapsto \E\left[\int_0^{t \wedge \tau_D} r_v(Y_s) dL^{\overset{\circ}{C}}_s\right], \quad
	t  \mapsto \E\left[1_{\left(t \geq \tau_{D_i}\right)} r_v^2(Y_{\tau_{D_i}}) \right],
	\end{align*}
are non-decreasing, their derivatives are non-negative. Differentiating the function $f(t)$, it follows that
	\begin{align*}
	\frac{d}{dt}f(t) \leq & \frac{d}{dt} \E\left[r_v^2\left(Y_{t \wedge \tau_{D_i}}\right)\right] \\
	 \leq & \E\left[1_{(t < \tau_{D_i})} \cL_z  r^2_v(Y_t) \right] 
		-  2\E\left[1_{(s < \tau_{D_i})}\frac{r^2_v(Y_s)}{T-s}ds\right]\\
	 \leq & \nu + \left(\lambda- \frac{2}{T-t}\right) f(t),
	\end{align*}
where the second inequality follows by \eqref{equation: mean of squared radial process} and the third inequality follows from \eqref{assumption of bounded elliptic operator} and Leibniz' rule. An application of Gronwall's inequality yields the claim
\begin{align*}
	f(t) \leq & \left(r_v^2(x_0) + \nu\int_0^t \left(\frac{T}{T-s}\right)^2e^{-\lambda s} ds \right)\left(\frac{T-t}{t}\right)^2 e^{\lambda t} \\
	\leq & \left( r^2_v(x_0) + \nu t\left(\frac{t}{T-t}\right) \right)\left(\frac{T-t}{t}\right)^2 e^{\lambda t}.
\end{align*}
\end{proof}

\subsubsection{Almost sure convergence}\label{sec:almost sure convergence}
We now show that the guided bridge actually hits its target.
\begin{proposition}\label{prop:almost sure convergence}
The proposal bridge process $Y$ satisfy the bridge property 
\begin{equation*}
	\lim_{t \uparrow T} r_v(Y_t)= 0,
\end{equation*}
$\Qb$-almost surely, where $\Qb$ is the extension of $\Qb_t$.
\end{proposition}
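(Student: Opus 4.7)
My plan is to combine the $L^2$ bound of \refthm{L2 bound of radial process} with a Borel--Cantelli argument along a judiciously chosen subsequence, followed by a maximal-inequality oscillation estimate on the intervals between subsequence times.

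First I would use an exhausting family of regular domains $D_n \uparrow \M$ with exit times $\tau_n$. The non-explosion assumption on $X = \pi(U)$ transfers to $Y$ through the absolute continuity of $\Qb_t$ with respect to $\Pb$ on every $[0,t]$ with $t<T$, hence $\tau_n \uparrow \infty$ $\Qb$-a.s. Applying \refthm{L2 bound of radial process} in each $D_n$ and letting $n \to \infty$ via Fatou's lemma upgrades \eqref{mean squared radial bound} to the unlocalised estimate
\begin{equation*}
 \E_{\Qb}\bigl[r_v^2(Y_t)\bigr] \;\leq\; r_v^2(x_0)\frac{(T-t)^2}{t^2} e^{\lambda t} + \nu (T-t)\, e^{\lambda t},
\end{equation*}
valid for all $t \in [0,T)$. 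Choosing $t_n := T - n^{-2}$, this yields $\E_{\Qb}[r_v^2(Y_{t_n})] \leq C n^{-2}$ for all large $n$, and Markov's inequality combined with Borel--Cantelli, intersected over $\varepsilon = 1/k$, shows that $r_v(Y_{t_n}) \to 0$ $\Qb$-almost surely along the subsequence.

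To lift this to a continuous limit, I would estimate the oscillations on each interval $[t_n,t_{n+1}]$ using the semimartingale decomposition \eqref{eq:squared radial formula} applied to $Y$:
\begin{equation*}
 r_v^2(Y_t) = r_v^2(Y_{t_n}) + 2\!\int_{t_n}^{t}\! r_v(Y_s)\,dN_s - \!\int_{t_n}^{t}\! r_v(Y_s)\, dL^{\mathring C}_s + \!\int_{t_n}^{t}\!\Bigl(\cL_z r_v^2(Y_s) - \tfrac{2 r_v^2(Y_s)}{T-s}\Bigr) ds .
\end{equation*}
The local-time term contributes with the helpful sign (Barden--Le), the drift is controlled by $\cL_z r_v^2 \leq \nu + \lambda r_v^2$, and Doob's/BDG's maximal inequality applied to the local martingale part together with the preceding $L^2$ bound should give
$
\E_{\Qb}\bigl[\sup_{t_n \leq t \leq t_{n+1}} r_v^2(Y_t)\bigr] = O(n^{-2}),
$
which is summable. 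A second Borel--Cantelli step together with path continuity of $Y$ then produces $\lim_{t \uparrow T} r_v(Y_t) = 0$ $\Qb$-almost surely, as claimed.

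\textbf{Main obstacle.} The hardest part is making the oscillation estimate rigorous: the guiding drift $-\nabla r_v^2/(2(T-t))$ blows up as $t \uparrow T$, and a naive bound would throw away the fact that this drift is \emph{attractive}. The crucial point is to retain the negative compensator term $-2r_v^2/(T-s)$ with its sign rather than absorb it in absolute value, and to use Barden--Le's one-sided G\^ateaux-derivative inequality $D_+ r_{X_s}(\nu) + D_+ r_{X_s}(-\nu) \leq 0$ so that the local-time correction at $\Cut(v)$ always works in our favour. Verifying that the non-explosion of $Y$ under $\Qb$ really follows from that of $X$ under $\Pb$ (so that the domain exhaustion is legitimate) is an additional technical point that must be handled along the way.
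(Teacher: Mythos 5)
Your argument is correct, but it takes a substantially heavier route than the paper does. The paper simply lets $n\to\infty$ in the localised $L^2$ bound of \refthm{L2 bound of radial process} via monotone convergence, applies Jensen's inequality to pass from $\E[r_v^2(Y_t)]$ to $\E[r_v(Y_t)]$, observes that the resulting bound tends to $0$ as $t\uparrow T$, and then concludes $r_v(Y_T)=0$ $\Qb$-a.s.\ by nonnegativity (implicitly invoking path continuity of $Y$ up to $T$ and Fatou). You instead manufacture almost-sure path convergence from scratch: Chebyshev plus Borel--Cantelli along $t_n=T-n^{-2}$ gives $r_v(Y_{t_n})\to 0$ a.s., and a Doob/BDG oscillation estimate over each $[t_n,t_{n+1}]$ (keeping the attractive drift and the nonpositive local-time term with their signs) promotes this to $\lim_{t\uparrow T}r_v(Y_t)=0$. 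What your approach buys is that it does not presuppose that $Y$ has a continuous extension to the closed interval $[0,T]$ under $\Qb$ --- a point the paper elides --- and so it is genuinely more self-contained; the price is the extra maximal-inequality machinery, where one does need to check, as you flag, that $d[N]_s$ is bounded (which follows from the standing assumption that $\sigma$ and $\sigma^{-1}$ are bounded and that $U_s$ is an isometry). Both proofs rest on exactly the same core estimate, \refthm{L2 bound of radial process}, and your unlocalised form of \eqref{mean squared radial bound} obtained via Fatou is the same object the paper produces via monotone convergence.

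One small caution: before invoking an exhaustion under $\Qb$ you should be slightly more explicit that the locally absolutely continuous change of measure on each $[0,t]$, $t<T$, implies $\tau_n\uparrow T$ $\Qb$-a.s.\ (not $\tau_n\uparrow\infty$ as you wrote --- the relevant time horizon is $T$), so that $1_{\{t<\tau_n\}}\uparrow 1$ $\Qb$-a.s.\ for each fixed $t<T$; this is exactly what makes the monotone-convergence/Fatou passage legitimate. With that adjustment the argument is sound.
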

\begin{proof}
    We only need to show that $\Q(r_v(Y_T)=0)=1$.
Let $\{D_n\}_{n=1}^{\infty}$ be an exhaustion of $\M$, that is, a sequence consists of open, relatively compact subsets of $M$ such that $\bar{D}_n \subseteq D_{n+1}$ and $\M = \bigcup_{n=1}^{\infty} D_n$. Furthermore, let $\tau_{D_n}$ denote the first exit time of $X$ from $D_n$. Then, from \refthm{L2 bound of radial process}, the sequence 
$
	\bigl(\E [1_{\{t < \tau_{D_n}\}} r_v^2(Y_t)]\bigr)_{n=1}				^{\infty}
$
is non-decreasing and bounded. Hence, from the monotone convergence theorem, it has a limit which is bounded by the right-hand side of \eqref{mean squared radial bound}. Applying Jensen's inequality to the left-hand side of \eqref{mean squared radial bound},
\begin{equation*}
	\E [ r_v(Y_t)] \leq \left( r^2_v(x_0) + \nu t\left(\frac{t}{T-t}\right) \right)^{\frac{1}{2}}\left(\frac{T-t}{t}\right) e^{\frac{\lambda t}{2}}.
\end{equation*}
By the monotone convergence theorem, the right-hand side of the equation above converges to zero, and since $r_v(Y_T)$ is non-negative, we conclude that $\E[r_v(Y_t)] = 0$ and hence $\Q(r_v(Y_T)=0)=1$.
\end{proof}

\section{Proof of Theorem \ref{main result theorem} and \ref{main result theorem for BM}}\label{sec:proof of main result}
We now prove the remaining parts of the main theorems, starting with technical lemmas. The first result provides an SDE expression for the pullback of the radial vector field.

\subsection{Pullback of radial field}\label{sec:pullback process of radial field}
We follow the theory and notation in \cite[Section 2.4]{sommer2016anisotropically} and \cite[Section 11.5]{taubes2011differential}. If $s$ denotes a local vector field on $\M$, we can define a map $s^{\cFM} \colon \cFM \rightarrow \R^d$ by $s^{\cFM}(u) = u^{-1}s(\pi(u))$. Now, if $x_t$ is a curve on $\M$ and $w_t$ is the horizontal lift of $x_t$ from $w$,
we can let $s_t=w_{t,i}s^i_t$. Then $s^{FM}(w_t)=(s^1_t,\ldots,s^d_t)^T$ and
\begin{equation}
  (s^{FM})_*(h_{w_t}(\dot x_t))
  =
  w_t^{-1}\nabla_{\dot x_t}s
  =\frac{d}{dt}
  (s^1_t,\ldots,s^d_t)^T
  \ ,
  \label{eq:framecovder}
\end{equation}
where $h_w$ is the horizontal lift operator. In other words, the covariant derivative takes the form of the standard derivative applied
to the frame coordinates $s^i_t$.

	\begin{lemma}\label{lemma:sde for Y}
	Let $s$ be a local vector field on $\M$, $K_t = U^{-1}_t\left( s\left(\pi\left(U_t\right)\right)\right)$, and $dU_t = H_i(U_t) \circ dZ^i_t$. 
	Then on $\M \backslash \Cut(p)$
		\begin{equation*}
		dK_t = U_t^{-1}\left(\nabla_{U_te_i}s\left(\pi\left(U_t\right)\right)
 \right) \circ dZ_t^i.
		\end{equation*}
	The corresponding It\^o equation is given by 
	\begin{equation*}
	dK_t =
	 U_t^{-1}\left(\nabla_{U_te_i}s\left(\pi\left(U_t\right)\right)
 \right)  dZ_t^i + \tfrac{1}{2} U_t^{-1}\left(\nabla_{U_t e_j}\nabla_{U_te_i}s\left(
\pi\left(U_t\right)\right)\right)d[Z^j,Z^i]_t.
	\end{equation*}
	\end{lemma}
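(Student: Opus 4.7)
The plan is to treat $K_t=s^{\cFM}(U_t)$ as a smooth $\R^d$-valued function composed with the horizontal semimartingale $U_t$, and then combine the Stratonovich chain rule with the frame--bundle identity \eqref{eq:framecovder}. There are essentially two steps: a one-line derivation of the Stratonovich form, and a routine Stratonovich--It\^o correction that requires applying \eqref{eq:framecovder} a second time.

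For the Stratonovich form, since Stratonovich differentials obey the ordinary chain rule for smooth maps of semimartingales,
\begin{equation*}
dK_t=(s^{\cFM})_*\!\bigl(\circ dU_t\bigr)=(s^{\cFM})_*\!\bigl(H_i(U_t)\bigr)\circ dZ^i_t.
\end{equation*}
Because $H_i(u)=h_u(u(e_i))$, I would invoke \eqref{eq:framecovder} pointwise at $u=U_t$ to rewrite $(s^{\cFM})_*(H_i(U_t))=U_t^{-1}\nabla_{U_t(e_i)}s$, which is exactly the claimed Stratonovich SDE.

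For the It\^o form, I would set $A_i(u):=u^{-1}\nabla_{u(e_i)}s$ and use the standard conversion
\begin{equation*}
A_i(U_t)\circ dZ^i_t = A_i(U_t)\,dZ^i_t+\tfrac12\,d[A_i(U),Z^i]_t.
\end{equation*}
Computing $dA_i(U_t)$ by the Stratonovich chain rule a second time produces a martingale part of the form $(A_i)_*(H_j(U_t))\,dZ^j_t$, whence $d[A_i(U),Z^i]_t=(A_i)_*(H_j(U_t))\,d[Z^j,Z^i]_t$, and it only remains to identify this pushforward with $U_t^{-1}\nabla_{U_t(e_j)}\nabla_{U_t(e_i)}s$.

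The main obstacle I anticipate is precisely this second application of the frame--coordinate identity: the function $A_i$ is not of the pure pullback form $t^{\cFM}$ for a vector field $t$ on $\M$, because it depends on $u$ both through $\pi(u)$ and through the frame vector $u(e_i)$. The natural resolution is to note that along a horizontal curve $U_t$ the frame vectors $U_t(e_i)$ are parallel transported, so the horizontal pushforward $(A_i)_*(H_j(U_t))$ equals the frame--coordinate representation of $\nabla_{U_t(e_j)}\nabla_{U_t(e_i)}s$; the cross term that would normally appear vanishes because $\nabla_{U_t(e_j)}U_t(e_i)=0$. This matches the stated It\^o correction. Smoothness of $s$ on $\M\backslash\Cut(p)$, combined with the standing assumption that $\{t\colon X_t\in\Cut(v)\}$ is a Lebesgue null set, ensures all stochastic integrals and brackets are well defined up to $\Pb$-null events.
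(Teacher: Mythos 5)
Your proof is correct and follows essentially the same route as the paper: the Stratonovich form comes from the chain rule plus \eqref{eq:framecovder}, and the It\^o form from differentiating once more. You are, however, more careful than the paper on the one subtle point. The paper's It\^o step says simply that the second claim ``follows by an application of the first claim on the local vector field $\nabla_{U_te_i}s$,'' glossing over the fact that $A_i(u)=u^{-1}\nabla_{u(e_i)}s$ is not a pullback $t^{\cFM}$ of a fixed vector field on $\M$, since it depends on the frame through $u(e_i)$ and not only through $\pi(u)$. Your resolution --- that a horizontal lift parallel-transports the frame vectors, so $\nabla_{U_t(e_j)}U_t(e_i)=0$ along horizontal curves and the cross term in the Leibniz rule for $\nabla_{\cdot}\bigl(\nabla_{U_t(e_i)}s\bigr)$ drops out --- is exactly the observation the paper's recursive application implicitly relies on, and it is what licenses identifying $(A_i)_*(H_j(U_t))$ with $U_t^{-1}\nabla_{U_t(e_j)}\nabla_{U_t(e_i)}s$. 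So this is the same argument, but with the key step made explicit rather than asserted.
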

	
	\begin{proof}
We can use the first equality in \eqref{eq:framecovder}, with $\dot{x}_0=ue$, for some $u\in \pi^{-1}(x_0)$, and $e\in\mathbb R^d$, and $w_0=u$, to get
\begin{equation*}
  (s^{FM})_*(h_u(ue))
  =
  u^{-1}\nabla_{ue}s
  =
  u^{-1}\nabla_{ue}\frac{\nabla d(v,\cdot)^2}2
  \ .
  \label{}
\end{equation*}
Now, $dU_t=H_i(U_t)\circ dZ_t^i$ and therefore 
\begin{equation*}
  d\big(u^{-1}s(\pi(U_t))\big)
  =
  d\big(s^{FM}(U_t)\big)
  =
  (s^{FM})_*\big(H_i(U_t)\big)\circ dZ_t^i
  =
  U_t^{-1}\nabla_{U_te_i}s
  \circ dZ_t^i
  \ ,
  \label{}
\end{equation*}
since $s^{FM}$ is a local diffeomorphism.

The second claim follows by an application of the first claim on the local vector field $\nabla_{U_te_i}s$ to get an sde for $U^{-1}_t\left(\nabla_{U_te_i}s\left(\pi(U_t)\right)\right)$. 
	\end{proof}

 The following result is an adaptation of Lemma 7 in \cite{delyon_simulation_2006} to Riemannian manifolds. Define the process 
$\psi_t := \exp\left[-\frac{1}{2}\norm[\big]{\sigma^{-1}_t(\phi(z))u^{-1}\left(\frac{\nabla r_v(z)^2}{2(T-t)^{1/2}} \right)}^2\right]$.

\begin{lemma}\label{lemma conditional expectation}
Let $(D, \phi)$ be a normal neighborhood centered at $v$ and $0 < t_1 < t_2 <...<t_N < T$ and $g \in C^{\infty}_b(\M)$ (smooth bounded function with compact support in $\M$), then, with $\psi_t$ as defined above, we have
\begin{equation*}
\lim_{t\rightarrow T} \frac{\E[g(x_{t_1},...,x_{t_N})\psi_t]}{\E[\psi_t]} = \E[g(x_{t_1},...,x_{t_N})|X_T=v].
\end{equation*}
\end{lemma}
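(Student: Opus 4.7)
The plan is to follow the structure of Lemma~7 in Delyon and Hu~\cite{delyon_simulation_2006}, with the Euclidean short-time density expansion replaced by the Riemannian small-time heat kernel asymptotics behind~\eqref{equation: convergence of grad log heat kernel}. Fix $t \in (t_N, T)$ and observe that $\psi_t$ depends on the history only through $(X_t, U_t)$, so the Markov property of $(X,U)$ yields
\begin{equation*}
\E[g(X_{t_1},\ldots,X_{t_N})\,\psi_t] = \E[g(X_{t_1},\ldots,X_{t_N})\,\Phi_t(X_{t_N})], \qquad \Phi_t(x) := \E[\psi_t \mid X_{t_N}=x].
\end{equation*}
Everything thus reduces to the asymptotics of $\Phi_t$ and of $\E[\psi_t]$ as $t\uparrow T$.

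The heart of the argument is a Laplace expansion of $\Phi_t(x)$ about $v$. Conditioning further on $X_t$,
\begin{equation*}
\Phi_t(x) = \int_{\M} p(t-t_N, x, y)\,\bar\psi(t, y)\,\Vol(dy), \qquad \bar\psi(t,y) := \E[\psi_t \mid X_t = y],
\end{equation*}
where $0 \le \bar\psi(t,\cdot) \le 1$, $\bar\psi(t,v)=1$, and $\bar\psi(t,y)\le e^{-c\,r_v(y)^2/(T-t)}$ on $\M\setminus\Cut(v)$ with $c>0$ coming from the uniform boundedness of $\sigma_t^{-1}$. Hence the integral over $\M\setminus D$, for a fixed normal neighborhood $D$ of $v$, is $O(e^{-c'/(T-t)})$ and negligible. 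On $D$, substitute $y = \Exp_v(\sqrt{T-t}\,\eta)$: the Riemannian volume form becomes $(T-t)^{d/2}(1+O(\sqrt{T-t}))\,d\eta$, $r_v(y) = \sqrt{T-t}\,|\eta|$, and the exponent in $\bar\psi$ converges pointwise in $\eta$ to $\tfrac12\,\eta^{\top} A(T,v)\,\eta$ with $A = (\sigma\sigma^{\top})^{-1}$. Combining this with continuity of $p(T-t_N,x,\cdot)$ at $v$ and dominated convergence against a Gaussian envelope in $\eta$ gives
\begin{equation*}
\Phi_t(x) = c(v)\,(T-t)^{d/2}\,p(T-t_N, x, v)\,(1+o(1)), \qquad c(v) = (2\pi)^{d/2}\,(\det A(T,v))^{-1/2},
\end{equation*}
uniformly for $x$ in compact subsets of $\M$.

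Plugging this into both numerator and denominator cancels the common factor $c(v)(T-t)^{d/2}$. By the Chapman--Kolmogorov relation $\E[p(T-t_N, X_{t_N}, v)] = p(T, x_0, v)$, and by Bayes' formula for the Markov process $X$,
\begin{equation*}
\frac{\E[g(X_{t_1},\ldots,X_{t_N})\,p(T-t_N, X_{t_N}, v)]}{p(T,x_0,v)} = \E[g(X_{t_1},\ldots,X_{t_N}) \mid X_T=v],
\end{equation*}
which is exactly the claim.

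The main obstacle is the uniformity in $x$ required to exchange the limit with the outer expectation. I would derive a dominating bound of the form $(T-t)^{-d/2}\Phi_t(x) \le C\,p(T-t_N,x,v)$ valid for all $t$ close to $T$, by combining the Gaussian envelope for $\bar\psi$ with standard on-diagonal heat-kernel estimates in a fixed compact neighborhood of $v$; this makes the asymptotic integrable against the law of $X_{t_N}$. A secondary point is $\Cut(v)$, on which $\bar\psi\equiv 1$ by the convention in \refsec{sec:notation and convention}; but $\Cut(v)$ has Lebesgue measure zero and lies outside $D$, so it only enters through the exponentially small tail already controlled above.
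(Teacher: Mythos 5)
Your proposal is correct and follows essentially the same route as the paper's proof: both arguments reduce the ratio to a Laplace-type asymptotic expansion of an integral against the weight $\exp\bigl[-\tfrac12\lVert\sigma^{-1}u^{-1}\bigl(\nabla r_v^2/(2\sqrt{T-t})\bigr)\rVert^2\bigr]$, pass to normal coordinates at $v$, rescale by $\sqrt{T-t}$ so that the $(T-t)^{d/2}$ prefactor cancels in the ratio, identify the limiting Gaussian integral, and conclude by Bayes' formula. The only presentational difference is that you condition on $X_{t_N}$ and analyse $\Phi_t(x)=\E[\psi_t\mid X_{t_N}=x]$ uniformly in $x$, whereas the paper writes the numerator directly as an integral of the joint density $\Phi_g(t,z)$ over $z\in\M$ and performs the same substitution there; the two bookkeepings are equivalent by Chapman--Kolmogorov. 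You are somewhat more explicit about the dominated-convergence bound needed to pass to the limit (the envelope $(T-t)^{-d/2}\Phi_t(x)\le C\,p(T-t_N,x,v)$), a point the paper leaves implicit, so that part is a welcome strengthening rather than a divergence.
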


\begin{proof}
First, since the cut locus of any complete connected manifold has (volume) measure zero, we can integrate indifferently in any exponential chart. For any $t \in (t_N,T)$, we have
	\begin{equation*}
		\frac{\E[g(x_{t_1},...,x_{t_N})\psi_t]}{\E[\psi_t]}
		= 	
		\frac
		{
		\int_\M \Phi_g(t,z)e^{-\frac{1}{2}\norm[\big]{\sigma^{-1}(t,\phi(z))u^{-1}\left(\frac{\nabla r_v(z)^2}{2(T-t)^{1/2}} \right)}^2}d\Vol(z)
		}			
		{
		\int_\M \Phi_1(t,z)e^{-\frac{1}{2}\norm[\big]{\sigma^{-1}(t,\phi(z))u^{-1}\left(\frac{\nabla r_v(z)^2}{2(T-t)^{1/2}} \right)}^2}d\Vol(z)
		},
	\end{equation*}
where $d\Vol(z)$ denotes the volume measure,
\begin{align*}
		\Phi_g&(t,z) =
		 \int_\M \cdot \cdot \cdot 
         \int_\M 							
         g(z_1,...,z_N)p(0,u;t_1,z_1)\cdot \cdot \cdot p(t_N,z_N;t,z)
         d\Vol(z_1) \cdot \cdot \cdot d\Vol(z_N),
\end{align*}
and of course $\Phi_1(t,z) = p(0,x_0;t,z)$. We can write the expectation above as 
\begin{align*}
		\int_\M 
		&
		\Phi_g(t,z)
		e^{-\frac{1}{2}\norm[\big]{\sigma^{-1}(t,\phi(z))u^{-1}\left(\frac{\nabla r_v(z)^2}{2(T-t)^{1/2}} \right)}^2}d\Vol(z)
		=
		\\
		&
		\int_{\phi(\M)} \Phi_g(t,\phi^{-1}(x))e^{-\frac{1}{2}\norm[\big]{\sigma^{-1}(t,x)u^{-1}\left(\frac{\nabla r_v(\phi^{-1}(x))^2}{2(T-t)^{1/2}} \right)}^2}\sqrt{\det(G(\phi^{-1}(x)))}dx,
\end{align*}
where $G$ is the matrix representation of the Riemannian metric. As we are in a normal neighborhood, and $\{e_i\}$ is an orthonormal basis of $T_vM$, we have in particular $r_v(\phi^{-1}(x)) := d(\phi^{-1}(x),v) = \norm{\Log_v(\phi^{-1}(x))} = \norm{\Log_v \circ \Exp_v \circ E(x)}= \norm{E(x)} = \norm{x}$. Thus, if we apply the change of variable $x=(T-t)^{1/2}y$ we get 
\begin{align*}
	(T-t)^{-\frac{d}{2}} 
	&
	\int_{\phi(\M)} \Phi_g(t,\phi^{-1}(x))e^{-\frac{1}{2}\norm[\big]{\sigma^{-1}(t,x)u^{-1}\left(\frac{\nabla r_v(\phi^{-1}(x))^2}{2(T-t)^{1/2}} \right)}^2}\sqrt{\det(G(\phi^{-1}(x)))}dx 		
	\\	
	=&
	\int_{\phi(\M)} \Phi_g(t,\phi^{-1}((T-t)^{\frac{1}{2}}y))H\left(t,y\right)\sqrt{\det(G(\phi^{-1}((T-t)^{\frac{1}{2}}			y)))}dy 
	\\
	 \overset{t \rightarrow T}{\rightarrow}
	 &		\int_{\phi(\M)} \Phi_g(T,\phi^{-1}(0))H(T,y)							\sqrt{\det(G(\phi^{-1}(0)))}dy 
	\\
	=& 
	\Phi_g(T,v) \sqrt{\det(G(v))}\int_{\phi(\M)}e^{-\frac{1}{2}\norm[\big]{\sigma^{-1}(T,0)u^{-1}\left(\frac{\nabla r_v(\phi^{-1}(y))^2}{2} \right)}^2}dy,
\end{align*}
where 
$
	H(t,y) 
	=
	\exp\left[-\frac{1}{2}\norm[\big]{\sigma^{-1}\left(t,(T-t)^{\frac{1}{2}}y\right)u^{-1}\left(\frac{\nabla r_v(\phi^{-1}(y))^2}{2} \right)}^2\right].
$
Therefore, Bayes formula implies that
	\begin{equation*}
		\lim_{t\rightarrow T} \frac{\E[g(x_{t_1},...,x_{t_N})\psi_t]}			{\E[\psi_t]} = \frac{\Phi_g(T,v)}{\Phi_1(T,v)} = 						\E[g(x_{t_1},...,x_{t_N})|X_T=v].
	\end{equation*}
\end{proof}

We are now ready to complete the proof of \refthm{main result theorem}.
\begin{proof}[Proof \refthm{main result theorem}]
The Radon-Nikodym derivative \eqref{eq:radon-nikodym derivative for radial semimartingale} together with Novikov's condition ensures the equivalence of the measures of $X|X_T=v$ and $Y$ on $[0,t]$, for every $t < T$. By \reflemma{lemma:sde for Y} and \reflemma{lemma: g function}, we obtain the expressions for $\{\varphi_t\colon t \in [0,T)\}$ and $\{\psi_t\colon t \in [0,T)\}$. The proof is concluded by \reflemma{lemma conditional expectation}.
\end{proof}

Recall that in the case of $\cL_z = \tfrac{1}{2}\Delta_M$, the term $\varphi_t$ has the particular form $
	\varphi_t = \exp\left[\int_0^t \frac{r(Y_s)}{T-s}\left(d\eta_s + dL_s\right)\right],
$
with $d \eta_s = \tfrac{\partial}{\partial r}\log \Theta^{-\frac{1}{2}}ds$. We then have the following result of $L_1$-convergence of $\varphi$ as described in \cite{thompson2015submanifold}.

\begin{lemma}\label{lemma: L1 convergence}
	With $\varphi_t$ as defined above then $\varphi_t \overset{L_1}{\rightarrow} \varphi_T$.
\end{lemma}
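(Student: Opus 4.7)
My plan is to prove $L^1$-convergence via Vitali's convergence theorem, which requires establishing (i) almost sure convergence of $\varphi_t$ to a limit $\varphi_T$ and (ii) uniform integrability of the family $\{\varphi_t\}_{t \in [0,T)}$. Both ingredients will rely on the $L^2$-bound from \refthm{L2 bound of radial process}, which gives $\E[r_v(Y_s)^2] = O(T-s)$ near $T$ and hence, by Jensen, $\E[r_v(Y_s)] = O(\sqrt{T-s})$.

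For the almost sure convergence, I would split
\begin{equation*}
\log\varphi_t = \int_0^t \frac{r(Y_s)}{T-s}\,d\eta_s + \int_0^t \frac{r(Y_s)}{T-s}\,dL_s =: I_t + J_t.
\end{equation*}
Since $d\eta_s = \partial_r \log \Theta_v^{-1/2}(Y_s)\,ds$ and $\Theta_v$ is smooth with $\Theta_v(v)=1$, the density $\partial_r \log \Theta_v^{-1/2}$ is bounded in a normal neighbourhood of $v$. By \refprop{prop:almost sure convergence}, $Y_s \to v$ almost surely, so eventually $|dI_s| \leq C\, r(Y_s)/(T-s)\,ds$. Combining this with the moment estimate and Fubini yields $\E\bigl[\int_0^T r(Y_s)/(T-s)\,ds\bigr] < \infty$, so $I_t \to I_T$ almost surely. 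The integral $J_t$ is nondecreasing, since $r \geq 0$ and $dL \geq 0$, so it has an almost sure limit in $[0,\infty]$; the assumption from \refsec{sec:notation and convention} that $\{s : Y_s \in \Cut(v)\}$ has Lebesgue measure zero, together with the Barden--Le decomposition in \refsec{sec:radial part of semimartingale}, lets one bound $\E[J_T] < \infty$ so that $J_T < \infty$ almost surely. Thus $\varphi_t \to \varphi_T := \exp(I_T + J_T)$ almost surely.

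For uniform integrability, I would establish $\sup_{t<T}\E[\varphi_t^p] < \infty$ for some $p > 1$. From $\varphi_t \leq \exp(|I_t| + J_t)$, Cauchy--Schwarz reduces this to exponential moment bounds for $|I_t|$ and $J_t$ separately. The bound on $\E[\exp(p|I_t|)]$ follows from the deterministic inequality $|I_t| \leq C \int_0^t r(Y_s)/(T-s)\,ds$ via a Khas'minskii-style argument together with the $L^2$-estimate on $r(Y_s)$, since $(T-s)^{-1/2}$ is integrable on $[0,T]$. Once uniform integrability is in place, Vitali's theorem delivers $\varphi_t \to \varphi_T$ in $L^1$.

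The main obstacle is the local-time contribution $J_t$ and its exponential moments. The factor $1/(T-s)$ blows up as $s \uparrow T$, while $r(Y_s) \to 0$ and the local time $L_s$ grows each time $Y$ approaches $\Cut(v)$, where the radial gradient is discontinuous. Controlling $\E[\exp(pJ_t)]$ uniformly in $t$ requires Thompson's geometric local-time estimates \cite{thompson2015submanifold} together with the Lebesgue null-set assumption, which together prevent pathological accumulation of local time near $T$. This step is where the bulk of the technical work lies; the rest of the argument is a routine combination of Vitali's theorem with the a.s. convergence established above.
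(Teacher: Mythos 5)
Your proposal follows the same high-level strategy as the paper: establish almost sure convergence of $\varphi_t$ together with uniform integrability of $\{\varphi_t : t \in [0,T)\}$ and then conclude $L^1$-convergence via Vitali's theorem, with the uniform integrability (especially the local-time contribution) ultimately outsourced to Thompson's estimates in \cite{thompson2015submanifold}. You supply considerably more detail on the almost sure convergence than the paper does: the paper simply cites \refprop{prop:almost sure convergence}, which only gives $r_v(Y_t)\to 0$ and does not by itself imply that the integrals defining $\log\varphi_t$ have a finite limit; your split $\log\varphi_t = I_t + J_t$ together with the bound $\E[r_v(Y_s)] = O(\sqrt{T-s})$ is what actually makes that step go through. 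One small imprecision in the $I_t$ estimate: the boundedness of $\partial_r\log\Theta_v^{-1/2}$ holds only inside a normal neighbourhood of $v$, which is fine for the a.s.\ convergence once $Y_s$ enters and stays in such a neighbourhood, but for the uniform integrability you would still need to control the contribution from excursions near the conjugate/cut locus, where $\Theta_v$ degenerates --- another piece that in practice falls under the same Thompson estimates you already invoke for $J_t$. Modulo that, the structure is sound and matches the paper's.
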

\begin{proof}
	Note that for each $t \in [0,T)$ we have $\E^{\Qb}[\varphi_t]<\infty$ as well as $\varphi_t \overset{}{\rightarrow} \varphi_T$ almost surely by \refprop{prop:almost sure convergence}. The result then follows from the uniform integrability of $\left\{\varphi_t \colon t \in [0,T)\right\}$, which can be found in Appendix C.2 in \cite{thompson2015submanifold}.
\end{proof}

\begin{proof}[Proof \refthm{main result theorem for BM}]
When the driving semimartingale is a standard Brownian motion, we recall that the Radon-Nikodym derivative is given by \eqref{eq: radon-nykodym derivative for BM}.
In this context the function $g$, defined by \eqref{eq:g function}, reduces to the expression (suppressing the $v$-dependence)
	\begin{align*}
		g(t,X)= \frac{r_v(X_t)^2}{T-t} := \frac{r_t^2}{T-t}.
	\end{align*} 
The geometric It\^o's formula applied to this function then yields, coming from \eqref{radial representation by kendall}, the SDE
\begin{align}\label{ito formula for drift term in BM case}
    &-\frac{1}{2}\int_0^t \frac{r_s^2}{(T-s)^2}ds 
	= 
    \\&\quad
	- 
	\frac{1}{2}\frac{r_t^2}{T-t} + \int_0^t \frac{r_s}{T-s}d \beta_s + \frac{1}{2}\int_0^t \frac{r_s \Delta_\M r_s}{T-s}ds
	 + 
	 \frac{1}{2}\int_0^t  \frac{1}{T-s} ds - \int_0^t \frac{r_s}{T-s}dL_s.
     \nonumber
\end{align}
We see that \eqref{ito formula for drift term in BM case} substituted into \eqref{eq: radon-nykodym derivative for BM} yields, for any bounded $\mathcal{B}_t(W(\M))$-measurable $F$,
\begin{align*}
	&\mathbb{E}^{\mathbb{Q}}[F(Y)] =  C_t \mathbb{E}^{\mathbb{P}}\bigg[F(X)\exp\bigg[- 					\frac{r_t^2}{2(T-t)} + \frac{1}{2}\int_0^t \frac{r_s					\Delta r_s}{T-s}ds -  \int_0^t \frac{r_s}{T-s}dL_s\bigg]				\bigg],
\end{align*}
$\mathbb{Q}_t \text{ - a.s.}$, where we used that $C_t = \left(T/(T-t)\right)^{1/2}$. We can equivalently write the above as
\begin{equation}\label{Equal in expectation}
	\mathbb{E}^{\mathbb{Q}}[F(Y)\varphi_t] = \tilde{C}_t\mathbb{E}^{\mathbb{P}}		[F(X)\psi_t],
\end{equation}
where
	\begin{equation*}\label{phi and psi processes}
		\varphi_t = 
		\exp\bigg[ \int_0^t  \frac{r_v(Y_s)}{T-s} \left(d\eta_s + dL_s\right)\bigg],	\qquad
		\psi_t = \exp\bigg[- \frac{r_v(X_t)^2}{2(T-t)}\bigg],
	\end{equation*}
	and where $\tilde{C}_t = \left(T/(T-t)\right)^{d/2}$.
    This follows from
	\begin{equation*}
		\Delta r_v = \frac{d-1}{r_t} + \frac{\partial}{\partial r_t} \log \Theta_v,
	\end{equation*}
which holds on $\M \backslash \Cut(v)$, where $\Theta$ is the Jacobian determinant of the exponential map. Therefore we can rewrite $D_t$ as
	\begin{equation*}
	D_t = 
		\exp\bigg[ -\frac{r_v(X_t)^2}{2(T-t)} + \frac{(d-1)}{2}\int_0^t \frac{1}{T-s}ds +\int_0^t  \frac{r_v(X_s)}{T-s} \left(d\eta + dL_s\right)\bigg],
	\end{equation*}
where 
$
	d\eta = \frac{\partial}{\partial r} \log \Theta^{-\frac{1}{2}}ds.
$
Letting $F \equiv 1$ in equation \eqref{Equal in expectation} we obtain
	\begin{equation*}
	\frac{\mathbb{E}^{\mathbb{Q}}[F(Y)\varphi_t]}{\mathbb{E}^{\mathbb{Q}}[\varphi_t]}= \frac{\mathbb{E}^{\mathbb{P}}[F(X)\psi_t]}		{\mathbb{E}^{\mathbb{P}}[\psi_t]}.
	\end{equation*}
From \reflemma{lemma conditional expectation} and \reflemma{lemma: L1 convergence} we have, as $t \uparrow T$,
	\begin{equation*}
	\frac{\mathbb{E}^{\mathbb{Q}}[F(Y)\varphi_t]}{\mathbb{E}^{\mathbb{Q}}[\varphi_t]} \rightarrow 
	C \mathbb{E}^{\mathbb{Q}}[F(Y)\varphi_T] \qquad
	 \frac{\mathbb{E}^{\mathbb{P}}[F(X)\psi_t]}{\mathbb{E}^{\mathbb{P}}[\psi_t]} \overset{}{\rightarrow} \mathbb{E}^{\mathbb{P}}[F(X)|X_T=v],
	\end{equation*}
which concludes the proof.
\end{proof}

\section{Numerical experiments}\label{sec:numerical experiments}
In this section, we illustrate our simulation scheme on certain types of manifolds, including examples of two- and three-dimensional manifolds. Furthermore, the radial bridge simulation scheme is used for maximum likelihood estimation of diffusion mean values \cite{hansen2021diffusiongeometric}. The code used to generate the illustrations is available at \url{https://bitbucket.org/stefansommer/theanogeometry}.

\subsection{Simulation of bridge process on 2- and 3-dimensional manifolds}
We first let the manifold $\M$ be the $2$-torus $\M = \T^2$ with the standard embedding in $\mathbb R^3$. Our goal is to simulate a conditioned process on $\T^2$, where we condition on a point in the cut locus of the initial point. Figure \ref{fig: torus}(a) shows four sample paths of the simulated bridge with the initial value depicted by the red point. The process is conditioned to arrive at the black point, which is in the initial point's cut locus, at $T=1$. Figure \ref{fig: torus}(b) presents the underlying squared radial vector field. The radial bridge's drift term follows the radial vector field multiplied by an increasing time-dependent scalar.

\begin{figure}[H] 
    \centering
	\subfloat[Four sample paths from the simulation scheme of the radial bridge, $X_t$, from $x$ (red point) to $v$ (black point).]{\includegraphics[width=.4\linewidth]{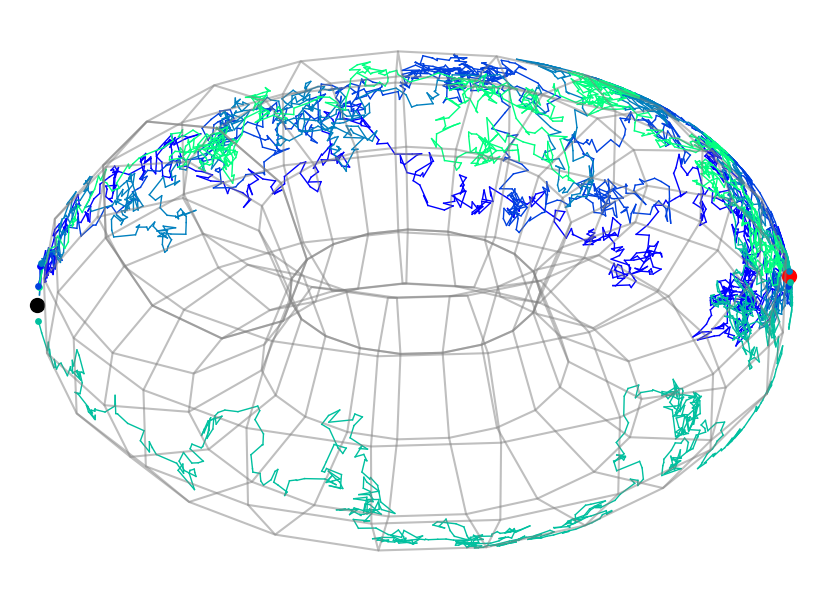}}
    \quad
	\subfloat[Radial vector field on the torus, related to the radial bridge, $X_t$, centered at the point $X_T=v$.]{\includegraphics[width=.4\linewidth]{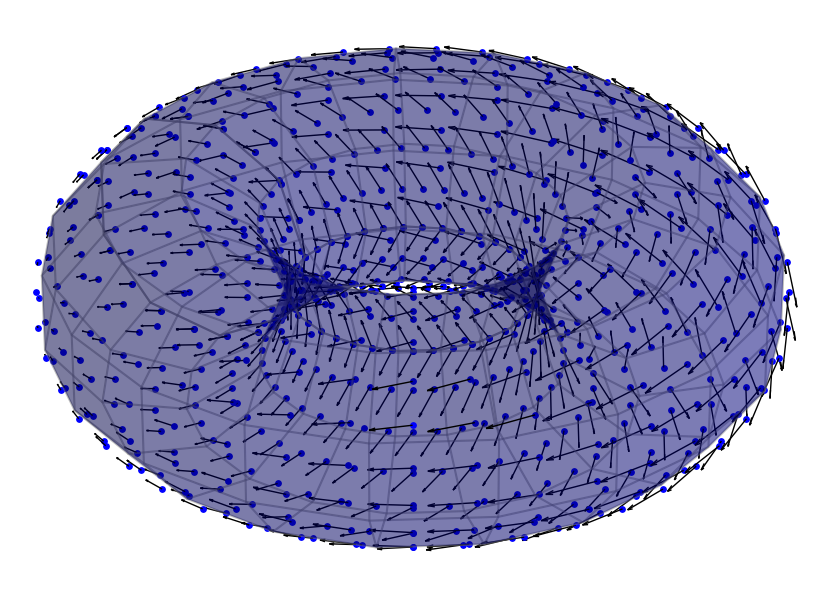}}
    \caption{}
    \label{fig: torus}
\end{figure}

In the second example, we consider another two-dimensional manifold, namely the cylinder $\M = \mathbb{S}^1 \times \R$. On the cylinder, the cut locus of a point $v = (p,x) \in \mathbb{S}^1 \times \R$ is the set $\Cut(v) = \{q\} \times \R$, where $q \in \mathbb{S}^1$ is antipodal to $p$. This is illustrated in Figure \ref{fig:cylinder cut locus}. 

\begin{figure}[H]
\centering
	\includegraphics[width=.29\linewidth]{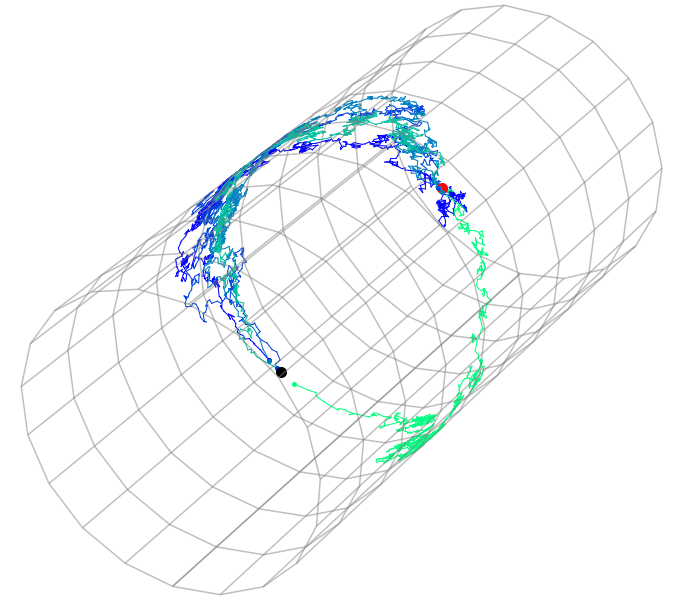}
	\caption{An example of four sample paths of the radial bridge conditioned to arrive at a point in the cut locus, $\Cut(x)$, of the initial point $x$.}\label{fig:cylinder bridges}
   \label{fig:cylinder bridge}
\end{figure}

Any winding around the cylinder trivially makes the process cross the cut locus. In Figure \ref{fig:cylinder bridges}, we illustrate the radial bridge's sample paths ending up at the cut locus at the terminal time winding different ways around the cylinder.

In the third example, we show how our simulation scheme works on a three-dimensional manifold. We consider the special orthogonal group $\SO(3)$ consisting of $3\times 3$ orthogonal matrices of determinant 1. Figure \ref{fig:SO3} shows sample paths of the radial bridge from $x \in SO(3)$ to $v \in SO(3)$, where $x=\mathrm{Id}_3$ (columns: red, blue, green vectors) and $v$ (black) is the terminal value $X_T=v$.

\begin{figure}[H] 
	\centering
	\subfloat{\includegraphics[width=.3\linewidth]{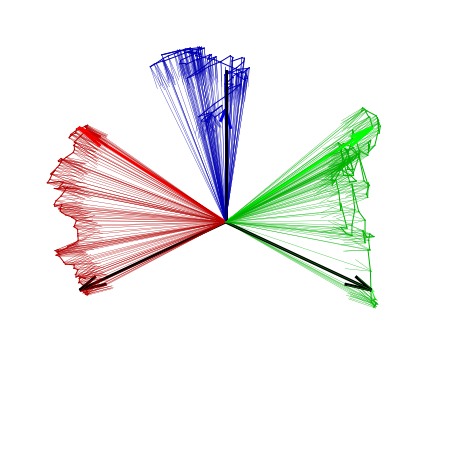}}
	\subfloat{\includegraphics[width=.3\linewidth]{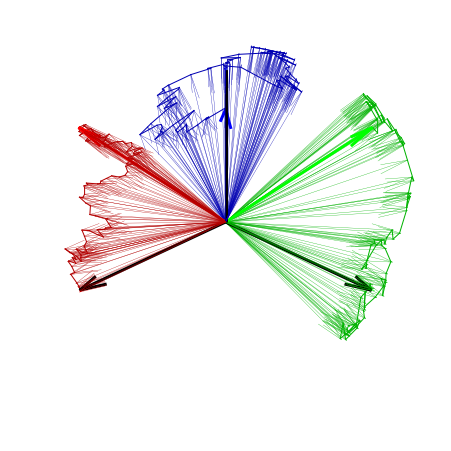}}
	\subfloat{\includegraphics[width=.3\linewidth]{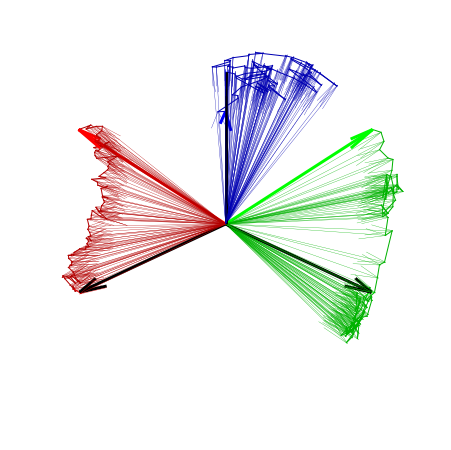}}
	\caption{The figure illustrates three sample paths from the radial bridge on $SO(3)$, by showing its left action on a basis of $\R^3$. The black arrows indicate the conditioned point.}\label{fig:SO3}
\end{figure}

\subsection{Bridge simulation for density estimation on 2-Sphere}
Bridge sampling can be used to estimate the transition density of a stochastic process. Assume that the family $\{\varphi_t\colon t \in [0,T)\}$ is uniformly integrable. As is seen from \reflemma{lemma conditional expectation}
	\begin{equation*}
	\E[C_t\psi_t] \overset{t \rightarrow T}{\longrightarrow} p(0,x_0;T,v) \sqrt{\det(G(v))}\int_{\phi(\M)}e^{-\frac{1}{2}\norm[\big]{\sigma^{-1}(T,0)u^{-1}\left(\frac{\nabla d(\phi^{-1}(y),v)^2}{2} \right)}^2}dy,
	\end{equation*}
which in the case of the $d$-sphere can be expressed as 
\begin{align*}
	\E[C_t\psi_t]
	&
	\overset{t \rightarrow T}{\longrightarrow} 
	p(0,x_0;T,v) \sqrt{\det(G(v))}\int_{\R^d}e^{-\frac{1}{2}y^TA(T,0)y}dy 
	\\
	&
	=
	\frac{p(0,x_0;T,v)\sqrt{(2\pi)^2}\sqrt{\det\left(G(v)\right)}}{\sqrt{\det\left(A(T,0)\right)}},
	\end{align*}
From the identity linking $\varphi_t $ to $\psi_t$, this leads to the following expression for the transition density with respect to the Riemannian volume form
	\begin{equation*}
	p(0,x_0;T,v) = \frac{\sqrt{\det\left(A(T,0)\right)}}{\sqrt{(2\pi T)^d}}e^{-\frac{1}{2}\norm[\big]{\sigma^{-1}(T,x_0)u^{-1}\left(\frac{\nabla d(x_0,v)^2}{2} \right)}^2}\E[\varphi_T],
	\end{equation*}
see also \cite{papaspiliopoulos2012importance}. In the Brownian motion setting, the above simplifies to	
	\begin{equation}
	p(0,x_0;T,v) = \frac{1}{\sqrt{(2\pi T)^d}}e^{-\frac{r_t^2}{2T}}\E[\varphi_T].
	\label{eq:p_approx_bridge}
	\end{equation}
Thus, we obtain an expression for the transition density of the Brownian motion.

On the sphere, we also have series expansions for the heat kernel as uniformly and absolute convergent power series
    \begin{equation}
        p(0,x;t,y) = \sum_{l=0}^{\infty} e^{-l(l+d-1)t}\frac{2l + d-1}{d-1} \frac{1}{A_{\mathbb S^{d}}}C_l^{\frac{d}{2}-1}(\langle x, y\rangle_{\R^{d+1}}),
	\label{eq:p_power}
    \end{equation}
for $x,y \in \mathbb S^d$, where $C_l^\alpha$ are the Gegenbauer polynomials and $A_{\mathbb S^{d}} = \frac{2\pi^{(d+1)/2}}{\Gamma((d+1)/2)}$ the surface area of $\mathbb S^d$ \cite{zhao2018exact}. In Figure \ref{fig:plot of densities}, we have plotted the transition density estimated from \eqref{eq:p_approx_bridge} using sampling to approximate $\E[\varphi_T]$ against a truncated version ($l=0,\ldots,16$) of \eqref{eq:p_power} as well as the density for the two-dimensional Euclidean Brownian motion. We have plotted the estimated densities along a geodesic running from the north pole to the south pole for three time points, $T=0.5, 1, 1.5$. 
	
\begin{figure}[H]
	\centering
	\center
	\includegraphics[width=.5\linewidth]{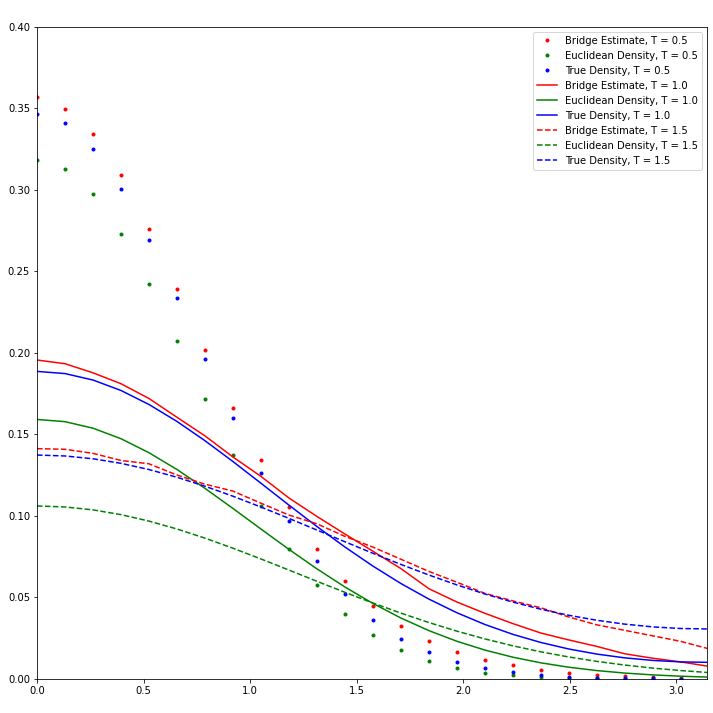}
	\caption{Estimated transition densities of a Brownian motion (red) on $\mathbb S^2$ using \eqref{eq:p_approx_bridge} with $T=0.5, 1, 2$. The densities are computed along a geodesic from the north-pole to the south-pole. The estimated densities are compared to an approximation using \eqref{eq:p_power} (blue) on $\mathbb S^2$ and densities of a $2$-dimensional Euclidean Brownian motion (green).}\label{fig:plot of densities}
\end{figure}
The resulting estimated heat kernel on $\mathbb S^2$, using the sampling scheme above, is displayed in \reffig{fig: sphere densities normalized time}. 
\begin{figure}[H]
	\centering
	\center
	\includegraphics[width=.29\linewidth,trim=150 100 300 100,clip=true]{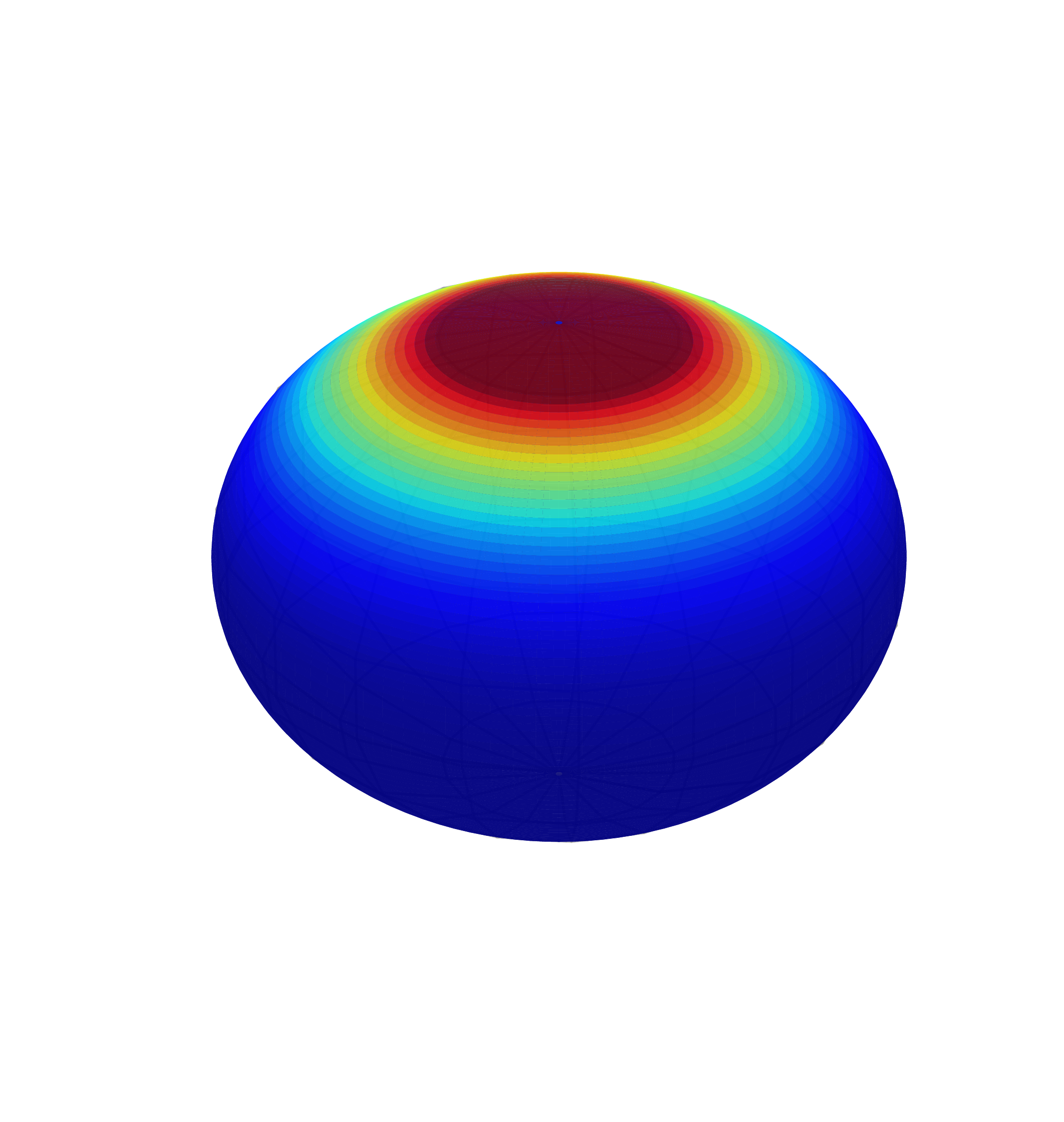}
	\includegraphics[width=.29\linewidth,trim=150 100 300 100,clip=true]{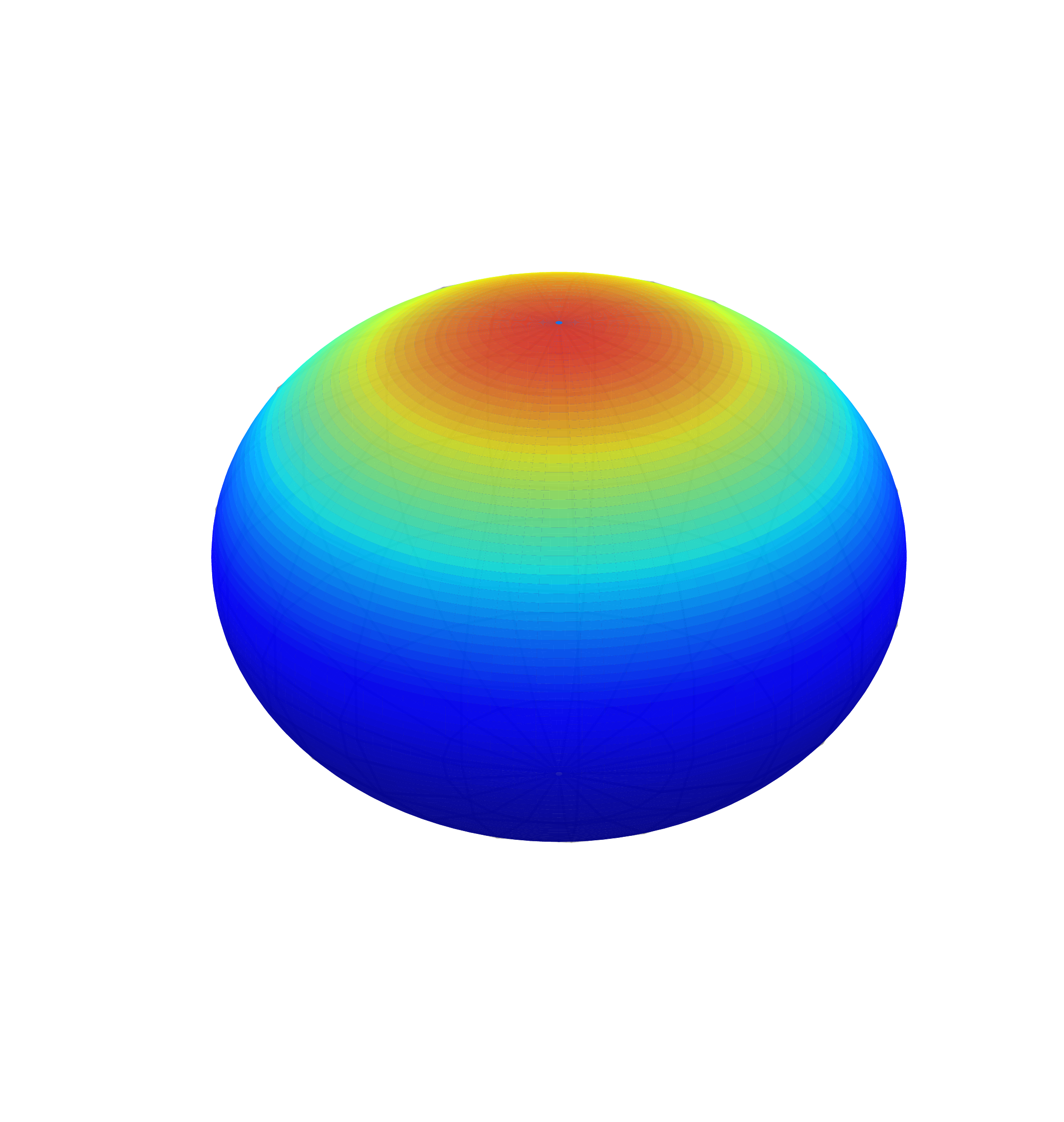}
	\includegraphics[width=.38\linewidth,trim=150 100 100 100,clip=true]{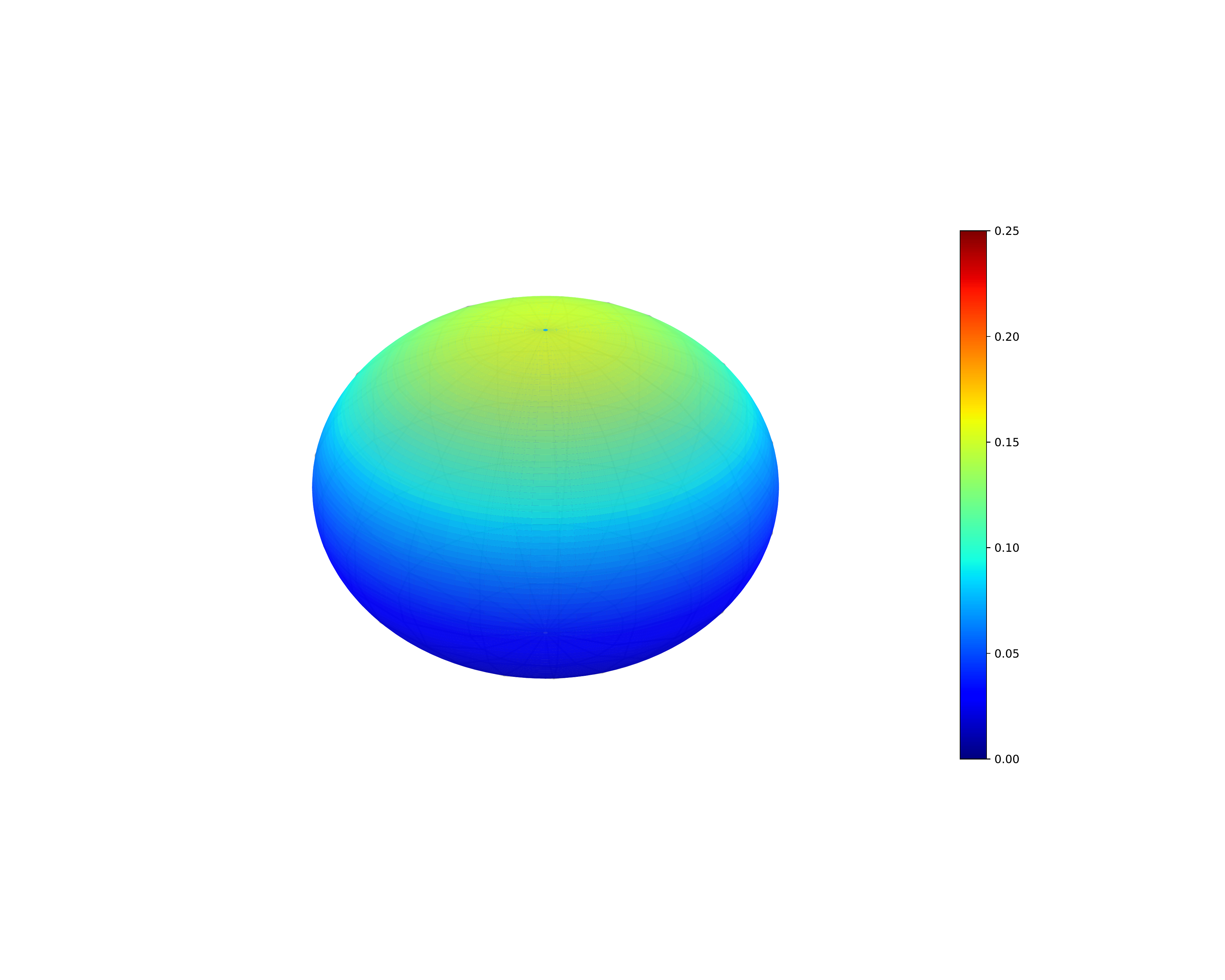}
	\caption{Estimated transition density on the $2$-sphere at times $T= t$, for $t=1/2, 3/4, 1$, respectively.}\label{fig: sphere densities normalized time}
\end{figure}

\reffig{fig: ellipsoid densities} shows the estimated heat kernel on a ellipsoid where no closed form solution is directly available.
\reffig{fig: torus densities} shows the estimated heat kernel on the torus $\T^2$.

\begin{figure}[H]
	\centering
	\center
	\includegraphics[width=.29\linewidth,trim=150 100 300 100,clip=true]{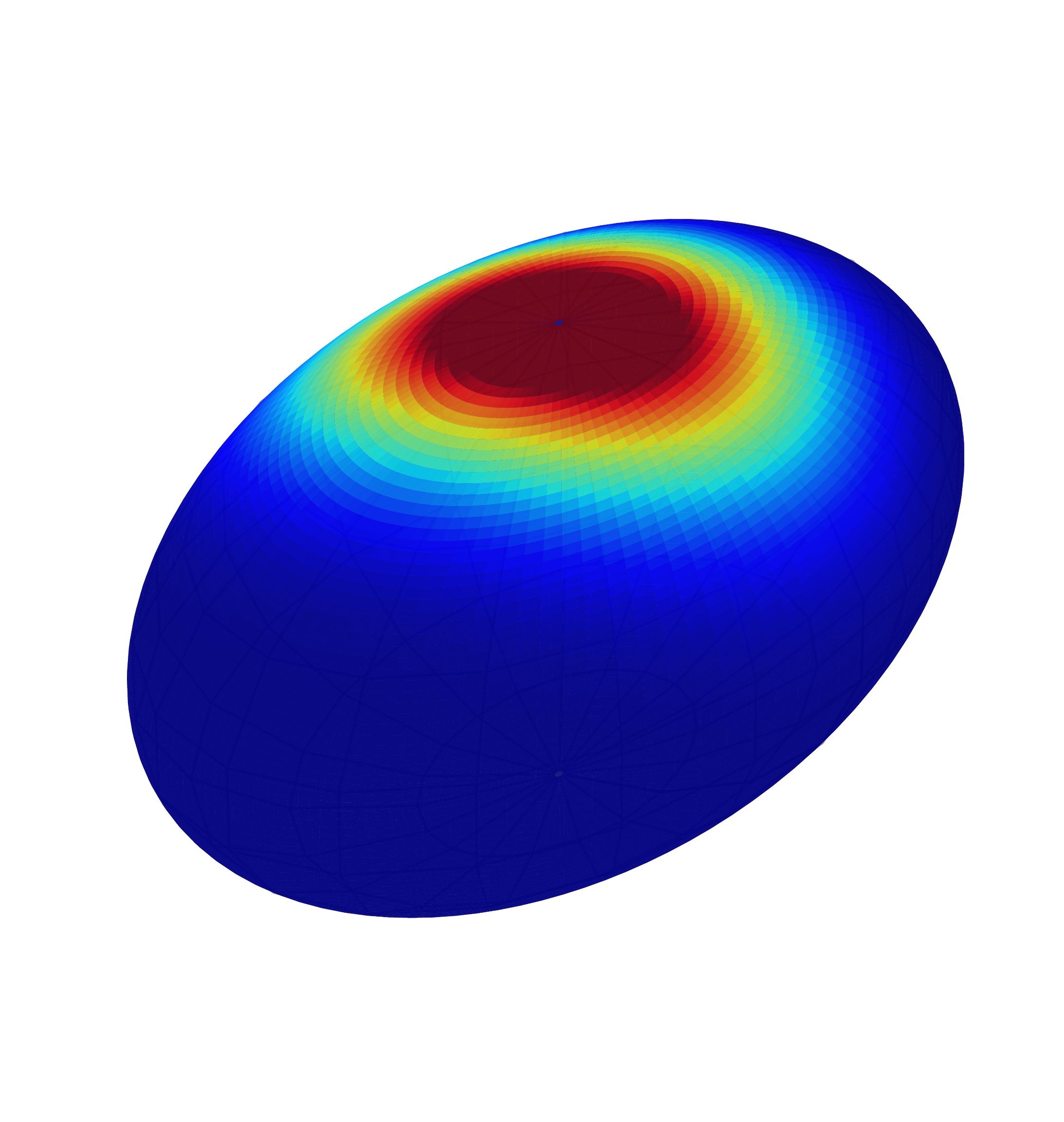}
	\includegraphics[width=.29\linewidth,trim=150 100 300 100,clip=true]{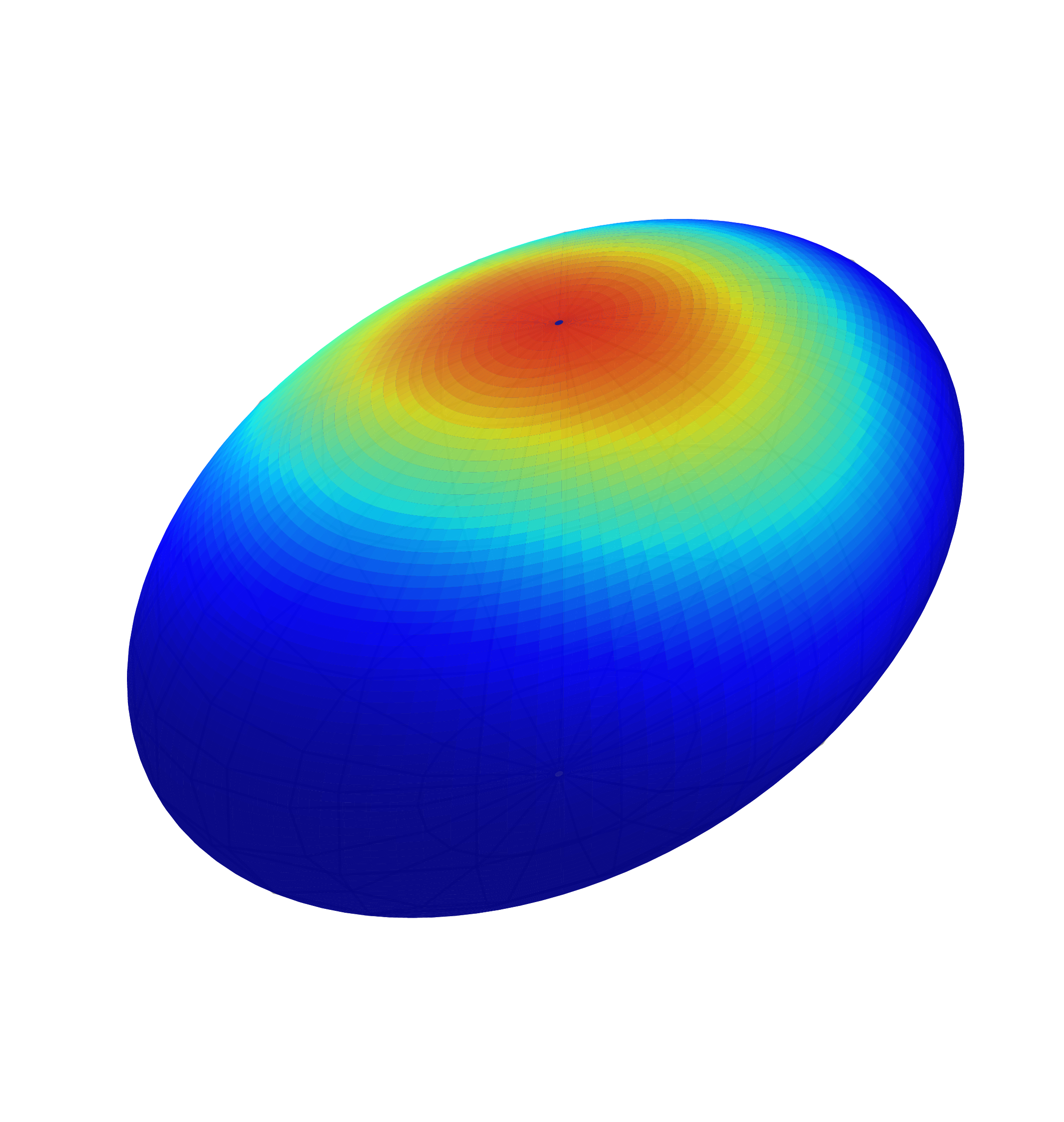}
	\includegraphics[width=.38\linewidth,trim=150 100 100 100,clip=true]{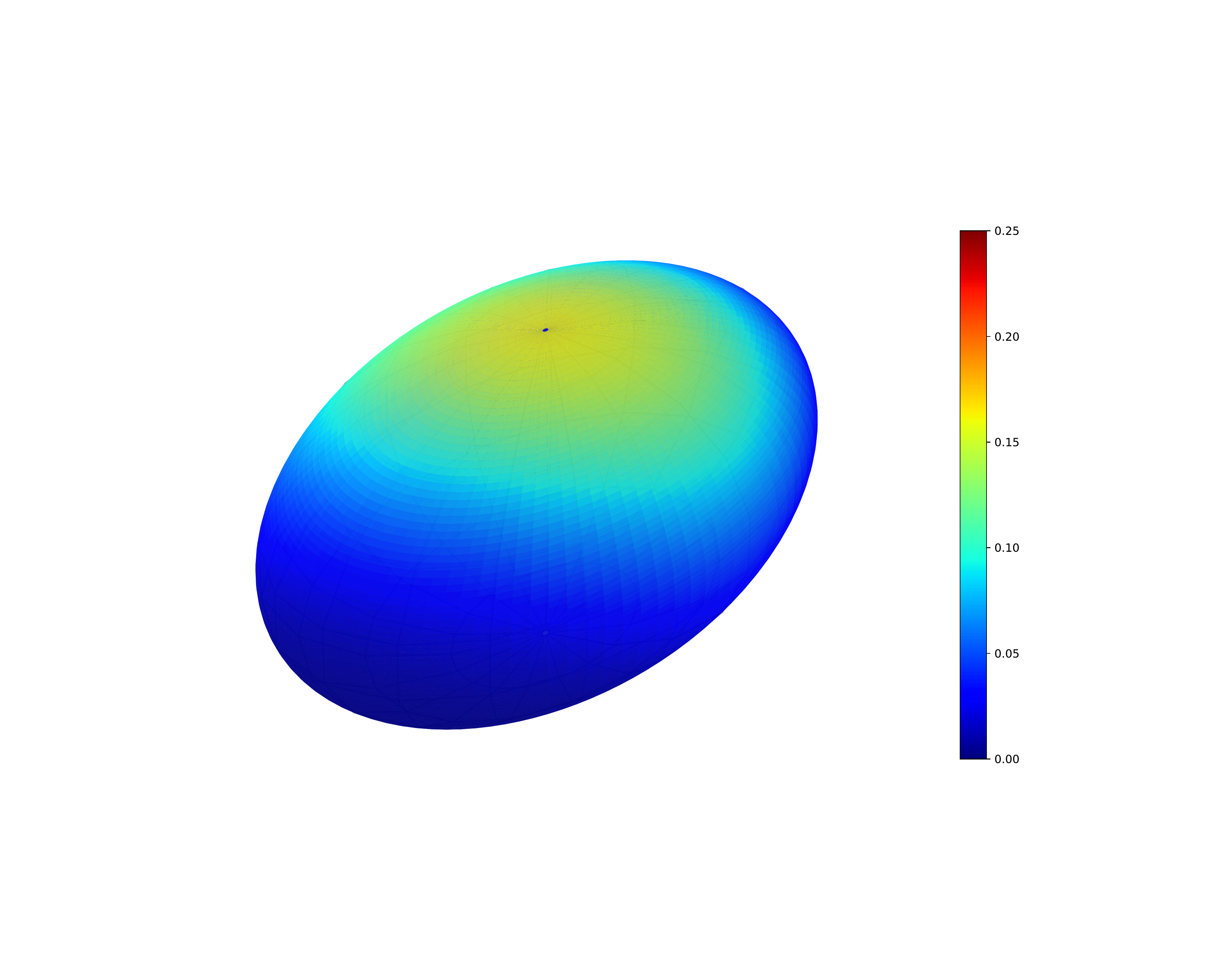}
	\caption{Estimated transition density on ellipsoid at times $T= t$, for $t=1/2, 3/4, 1$, respectively.}\label{fig: ellipsoid densities}
\end{figure}
	
\begin{figure}[H]
	\centering
	\center
	\includegraphics[width=.29\linewidth,trim=160 100 290 100,clip=true]{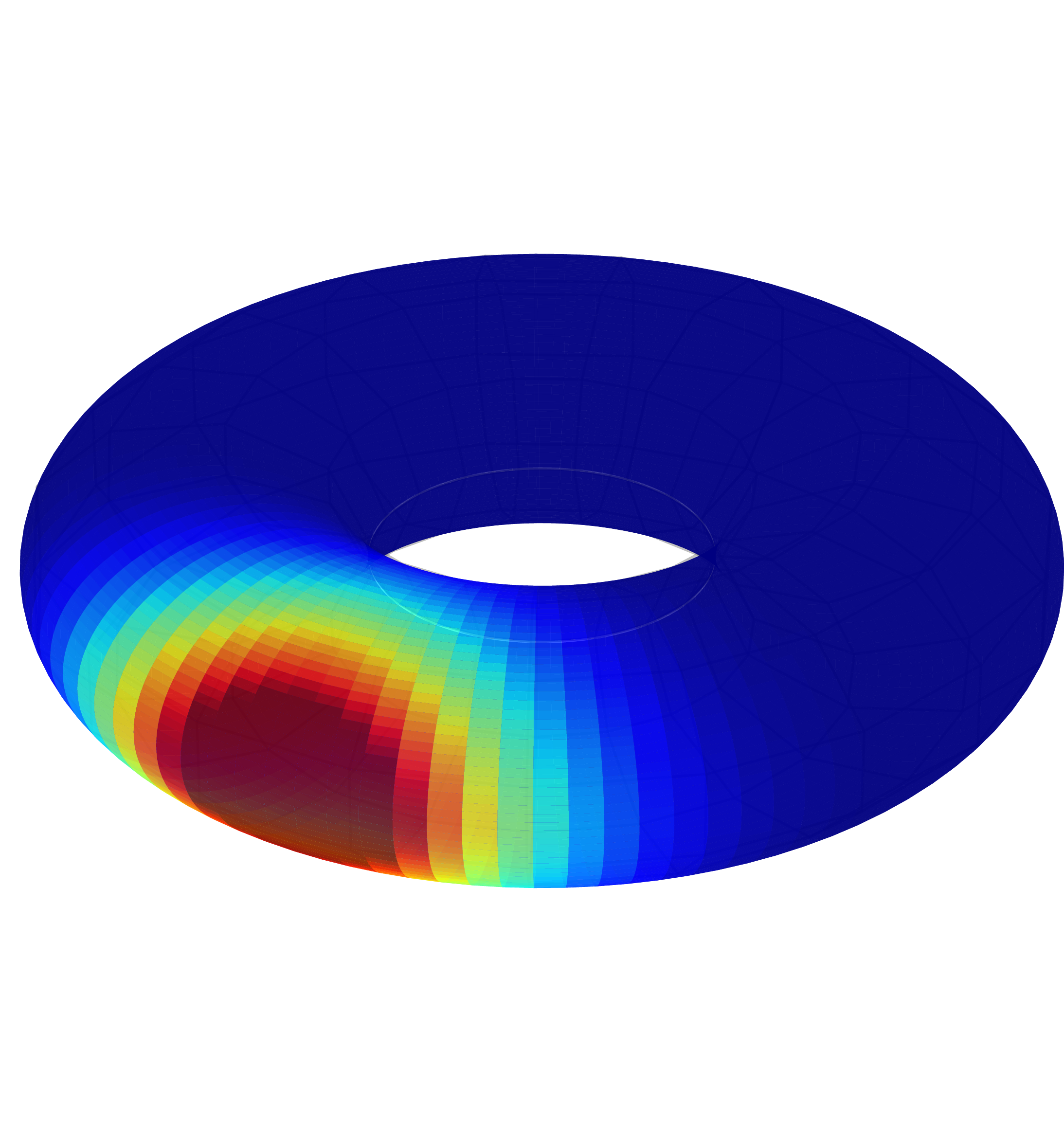}
	\includegraphics[width=.29\linewidth,trim=160 100 290 100,clip=true]{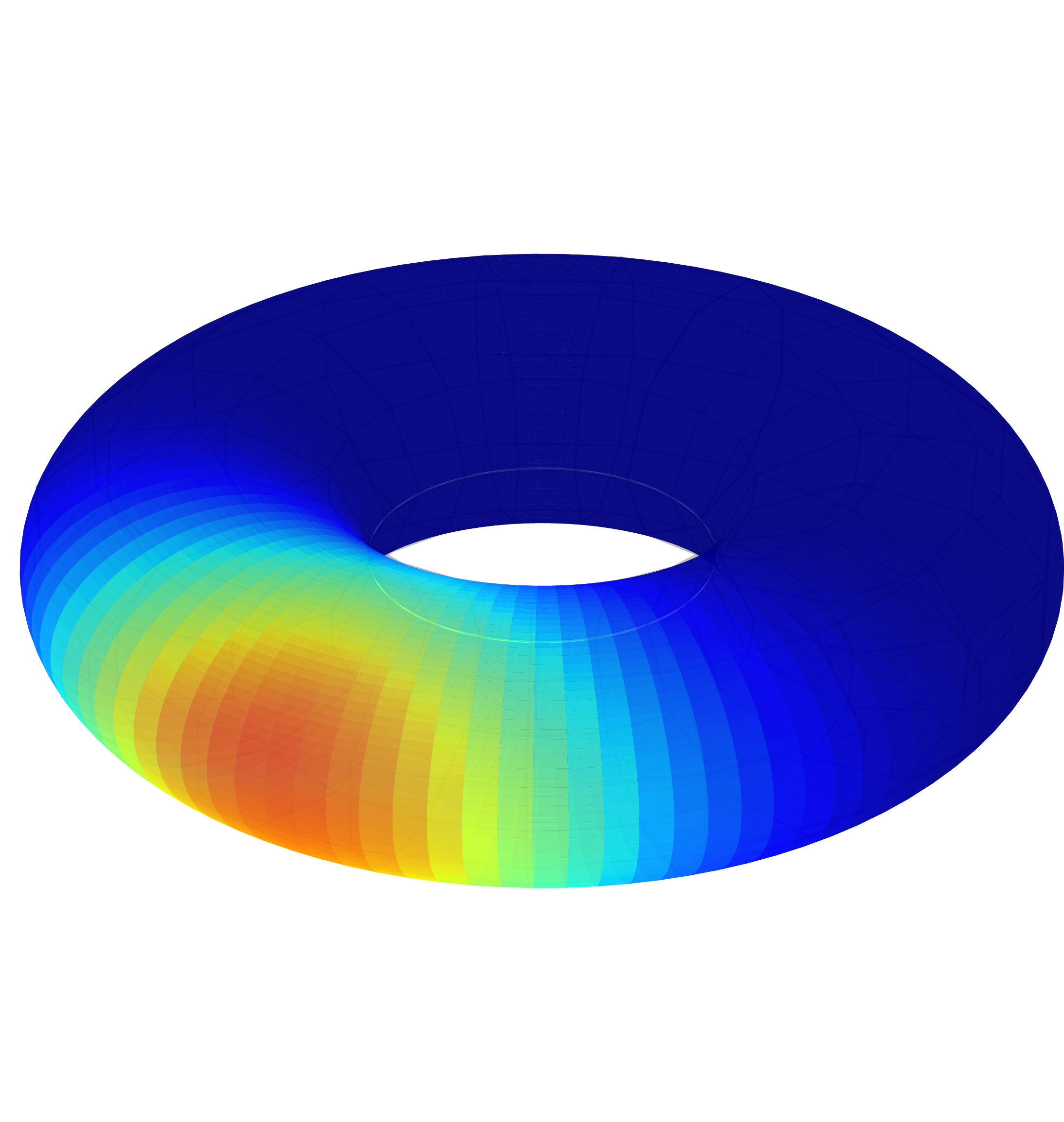}
	\includegraphics[width=.38\linewidth,trim=150 100 100 100,clip=true]{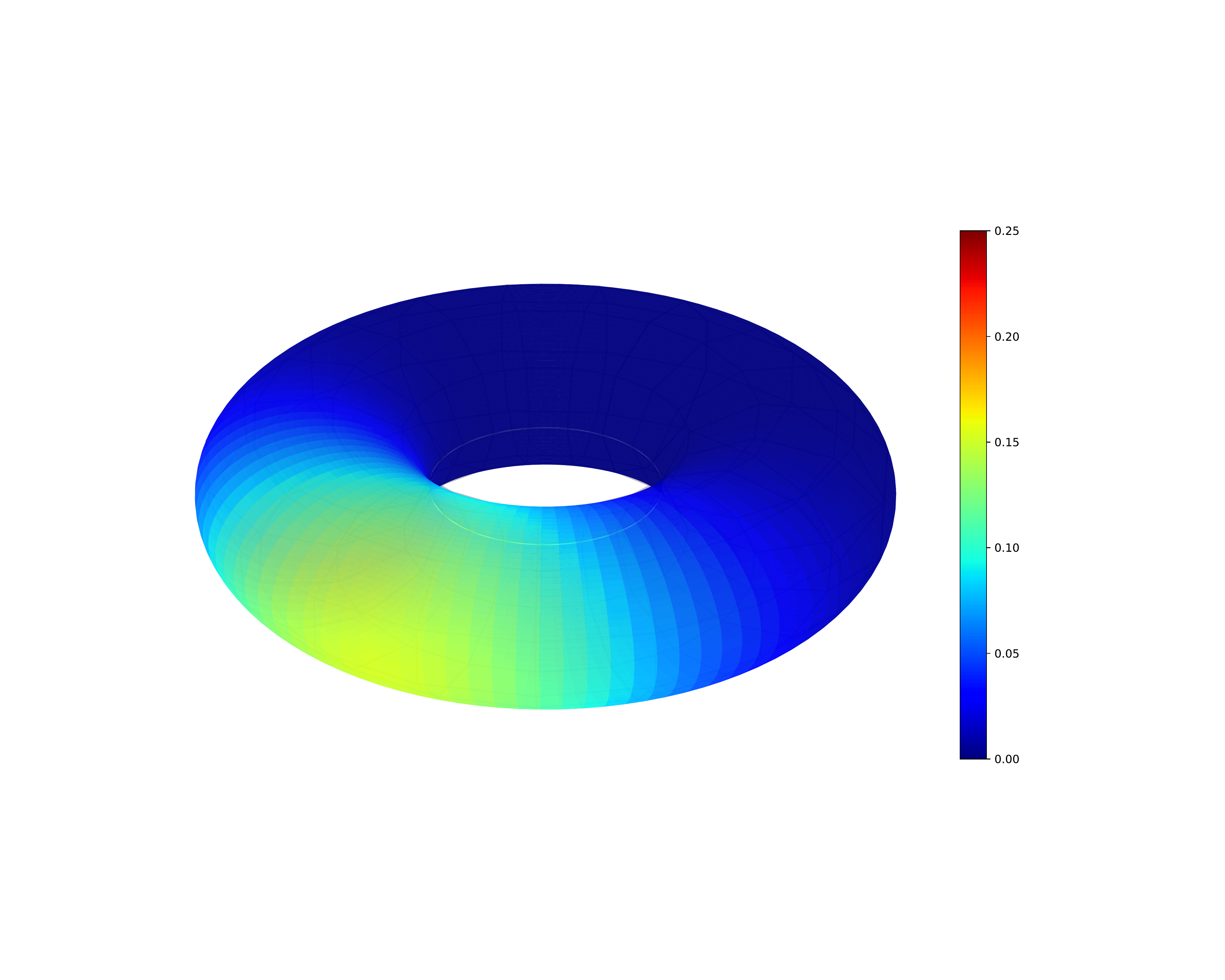}
	\caption{Estimated transition density on the embedded torus $\mathbb T^2$ at times $T= t$, for $t=1/2, 3/4, 1$, respectively.}\label{fig: torus densities}
\end{figure}

\subsection{Bridge simulation for diffusion mean estimation}\label{sec: example diffusion means}
We now demonstrate how the bridge sampling scheme can be used to estimate diffusion means \cite{hansen2021diffusiongeometric,hansen2021diffusion} on the two-sphere.
As described in the previous section, bridge sampling can be used to estimate the transition density of a Brownian motion. Using the approximation of the transition density \eqref{eq:p_approx_bridge} with sampling to approximate the expectation over $\varphi_T$, we can compute likelihood that make up the objective function for the diffusion mean. We can compute gradients of this quantity with respect to the initial valued and perform iterative optimization to find the diffusion mean. 

In figure \ref{fig: diffusion mean}(a), we sampled 100 points on $\mathbb S^2$ from a Brownian motion at $t=1$. Figure \ref{fig: diffusion mean}(b) illustrates how the initial guess of the mean (black dot) converges to the true mean (red dot) using an iterative optimization of the likelihood. From figure \ref{fig: diffusion mean}(c), we see how the iterated likelihood obtained converges. 
	\begin{figure}[ht] 
	\centering
	\subfloat[Sampled data points on $\mathbb{S}^2$.]{\includegraphics[width=.3\linewidth]{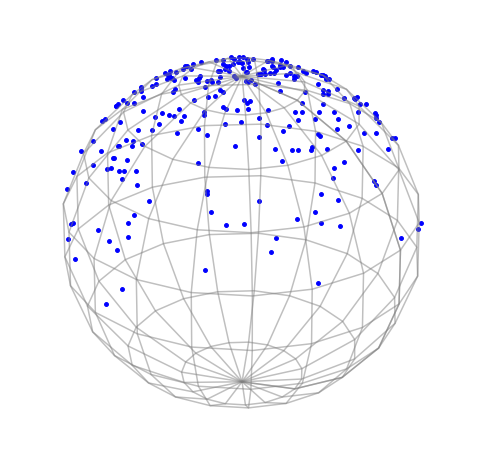}}
	\subfloat[Convergence to the diffusion mean (red dot) on $\mathbb{S}^2$ from initial guess (black dot).]{\includegraphics[width=.3\linewidth]{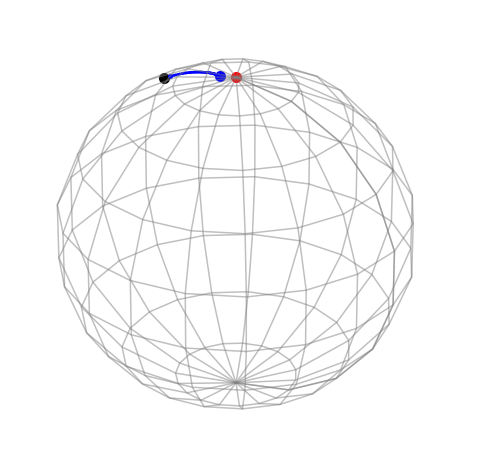}}
	\subfloat[The iterated likelihood.]{\includegraphics[width=.3\linewidth]{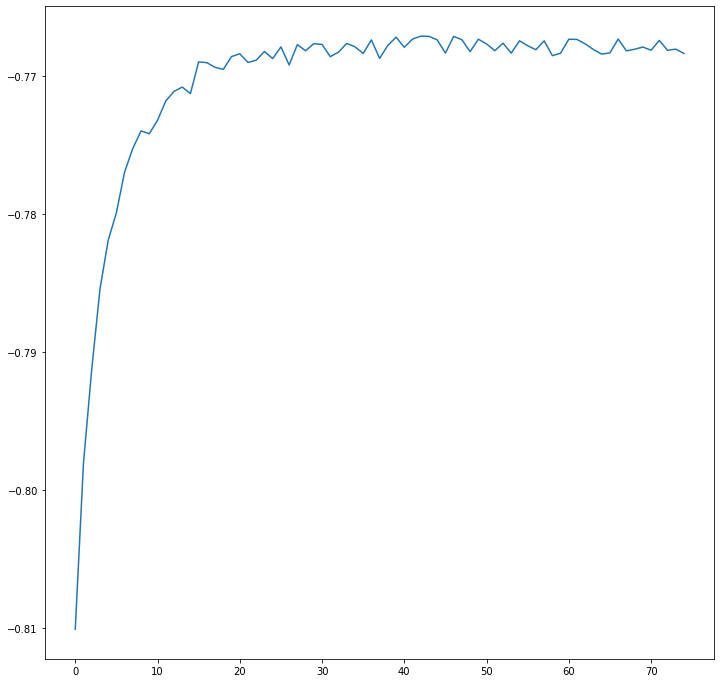}}
	\caption{The diffusion mean on $\mathbb S^2$ is estimated using our bridge sampling scheme. We sampled 100 points on $\mathbb S^2$ as endpoints of a Brownian motion started at the north pole. Using an initial guess of the mean, we sampled one bridge per observation to obtain an estimate for the likelihood and from this perform iterative optimization to maximize the likelihood. The iterative maximum likelihood approach yields a fast convergence after approximately 20 iterations.}\label{fig: diffusion mean}
	\end{figure}

\pagebreak
\section*{Acknowledgements}
The presented research was supported by the CSGB Centre for Stochastic Geometry and Advanced Bioimaging funded by a grant from the Villum Foundation.
S. Sommer is supported by the Villum Foundation Grants 40582 and the Novo Nordisk Foundation grant NNF18OC0052000.

\bibliographystyle{tfs}
\bibliography{bibfile}

\end{document}